\theoremstyle{definition}
\newtheorem{definition}{Definition}
\newtheorem{remark}{Remark}
\newtheorem{example}{Example}
\theoremstyle{plain}
\newtheorem{corollary}{Corollary}
\newtheorem{lemma}{Lemma}
\newtheorem{theorem}{Theorem}
\newtheorem{proposition}{Proposition}
\newenvironment{enumerate*}%
  {\begin{enumerate}%
    \setlength{\itemsep}{1pt}%
    \setlength{\parskip}{1pt}}%
  {\end{enumerate}}
\newcommand{\Cone}{\mathcal{C}}
\newcommand{\R}{\mathbb{R}}
\DeclareMathOperator{\ri}{\mathrm{ri}}
\DeclareMathOperator{\rb}{\partial_{\mathrm{r}}}
\newcommand{\id}{\mathrm{id}}
\newcommand{\g}{\mathfrak{g}}
\DeclareMathOperator{\sspan}{\mathrm{span}}
\DeclareMathOperator{\const}{\mathrm{const}}
\DeclareMathOperator{\Ker}{\mathrm{Ker}}
\title{Sub-Lorentzian extremals defined by an antinorm
\footnote{The work is supported by the Russian Science Foundation under grant 22-21-00877 (https://rscf.ru/en/project/22-21-00877/) and performed in Ailamazyan Program Systems Institute of Russian Academy of Sciences.}}
\author{
A.\,V.~Podobryaev \\ A.\,K.~Ailamazyan Program Systems
Institute of RAS \\ \tt{alex@alex.botik.ru} \\
}
\date{}
\begin{document}

\maketitle

\begin{abstract}
We consider a left-invariant (sub-)Lorentzian structure on a Lie group. We assume that this structure is defined by a closed convex salient cone in the corresponding Lie algebra and a continuous antinorm associated with this cone. We derive the Hamiltonian system for (sub-)Lorentzian extremals and give conditions under that normal extremal trajectories keep their causal type. Tangent vectors of abnormal extremal trajectories are either light-like or tangent vectors of sub-Riemannian extremal trajectories for the sub-Riemannian distribution spanned by the cone.

\textbf{Keywords}: Lorentzian manifold, sub-Lorentzian manifold, antinorm, extremal, extremal trajectory, causal type.

\textbf{AMS subject classification}:
53C50, 
53C30, 
49J15. 
\end{abstract}

\section{\label{sec-introduction}Introduction}

Recently, interest in left-invariant Lorentzian and sub-Lorentzian problems has grown from the point of view of geometric control theory.
Let us first mention the pioneering works in this direction by M.~Grochowski~\cite{1, 2} devoted to the sub-Lorentzian geometry of the Heisenberg group. This research was continued by Yu.\,L.~Sachkov and E.\,F.~Sachkova~\cite{3, 4}. See also the work~\cite{5} of E.~Grong and A.~Vasil'ev on left-invariant sub-Lorentzian geometry on the anti-de\,Sitter space and the work of Yu.\,L.~Sachkov~\cite{6, 7} on left-invariant Lorentzian geometry on Lobachevsky plane.

We use the Hamiltonian formalism of Pontryagin's maximum principle to describe (sub-)Lorentzian extremal trajectories. The difficulty is that the (sub-)Lorentzian length functional contains a square root, and the optimization problem is to maximize this functional, in contrast to the (sub-)Riemannian problem, which aims to minimize the corresponding length functional.
This is why the standard trick for sub-Riemannian geometry~\cite[Ch.~3]{8} of replacing the length functional with an energy functional (see, for example, \cite[\S~3.3.1]{9}) does not work in the (sub-)Lorentzian case.
Recall that this trick uses the Cauchy-Bunyakowski-Schwartz inequality
$$
l(g)^2 = \Biggl( \int\limits_0^{t_1}{\sqrt{q(\dot{g}(t))} \, dt} \Biggr)^2 \leqslant t_1 \cdot \int\limits_0^{t_1}{q(\dot{g}(t)) \, dt} = 2 t_1 J(g),
$$
where $g : [0, t_1] \rightarrow G$ is an admissible curve,
the length of its tangent vectors is defined by a quadratic form $q$,
and $l(g)$, $J(g)$ denote the length and the energy of the curve $g$, respectively.
It is important that the equality above is achieved only on constant speed curves.

Nevertheless, it turns out that the "energy" can be used in the (sub-)Lorentzian case to describe normal extremals. In addition, we consider a more general problem statement where a left-invariant sub-Lorentzian structure is defined by an arbitrary closed convex salient cone in the Lie algebra and a continuous antinorm associated with this cone.
Using Pontryagin's maximum principle, the corresponding Hamiltonian system is derived.
It turns out that under some additional conditions the normal extremal trajectory retains its causal type
(i.e. its tangent vector always remains either time-like or light-like).
More precisely, the dual function of the antinorm should be an antinorm as well (associated with the dual cone).
Moreover, unlike Riemannian geometry, abnormal extremal trajectories always arise in Lorentzian geometry. These abnormal trajectories coincide with light-like extremal trajectories, and thus are not strictly abnormal. In sub-Lorentzian geometry, generally speaking, abnormal extremal trajectories can have both light-like tangent vectors and tangent vectors coinciding with the tangent vectors of some of sub-Riemannian abnormal trajectories determined by the distribution of subspaces, which is linearly generated by the cone. If this distribution is contact, then abnormal extremal trajectories are light-like and not strictly abnormal.

It is appropriate to mention here the work of L.\,V.~Lokutsievskiy~\cite{10} where a remarkable technique of convex trigonometry was developed, which makes it possible to parameterize solutions of Hamiltonian systems for extremals in the sub-Finsler case with a two-dimensional distribution~\cite{11, 12}, as well as for some distributions of higher dimensions~\cite{13}. It would be interesting to develop concave hyperbolic trigonometry for sub-Lorentzian problems defined by an arbitrary antinorm.

This work has the following structure. In Section~\ref{sec-problem-statement}, after recalling the concept of an antinorm, the optimal control problem statement is given. Then, in Section~\ref{sec-extremal-trajectories} we introduce some necessary definitions from convex analysis and we derive a Hamiltonian system for extremal trajectories using Pontryagin's maximum principle. Section~\ref{sec-crl-ex} provides some corollaries and examples.

The author is grateful to the anonymous reviewer for the useful comments that helped to improve the article significantly.

\section{\label{sec-problem-statement}Problem statement}

Let $\Cone$ be a closed convex cone in a finite-dimensional real vector space $V$.

\begin{definition}
\label{def-1}
{\it The relative interior} of the cone $\Cone$ is the set
$$
\ri{\Cone} = \{u \in \Cone \, | \, \forall\,v\in\Cone, \, v \neq u \ \exists\,w\in\Cone, \lambda \in (0,1) : u = \lambda v + (1-\lambda)w\}.
$$
{\it The relative boundary} of the cone $\Cone$ is the set $\rb{\Cone} = \Cone \setminus \ri{\Cone}$.
\end{definition}

\begin{definition}
\label{def-2}
{\it An antinorm}~\cite{14} associated with the cone $\Cone$ is a function $\alpha : V \rightarrow \R \cup \{-\infty\}$ such that
\begin{enumerate}
\item[(i)] $\alpha|_{\ri{\Cone}} > 0$, $\alpha|_{\rb{\Cone}} = 0$, $\alpha|_{V \setminus \Cone} = -\infty$;
\item[(ii)] for any $v \in V$ and $\lambda > 0$ the equality $\alpha(\lambda v) = \lambda \alpha(v)$ is satisfied;
\item[(iii)] for any $v, w \in V$ the inequality $\alpha(v+w) \geqslant \alpha(v) + \alpha(w)$ is satisfied, i.e., the function $\alpha$ is concave.
\end{enumerate}
\noindent An antinorm $\alpha$ is called {\it continuous}, if the function $\alpha|_{\Cone}$ is continuous.
\end{definition}

Consider the following left-invariant optimal control problem on a real finite-dimensional Lie group $G$.
Let $\Cone \subset \g$ be a closed convex cone in the corresponding Lie algebra, and $\alpha$ be a continuous antinorm associated with the cone $\Cone$.
It is required to find a Lipschitz curve $g : [0, t_1] \rightarrow G$ connecting the unit element of the group $G$ with an arbitrary fixed element $g_1 \in G$,
and a measurable control $u \in L^{\infty}([0, t_1], \, \Cone \setminus 0)$ with values in the set $\Cone \setminus 0$ such that
\begin{equation}
\label{podobryaev-eq-optimal-control-problem}
g(0) = \id, \qquad g(t_1) = g_1, \qquad \dot{g}(t) = L_{g(t) *} u(t), \qquad \int\limits_0^{t_1}{\alpha(u(t)) \, dt} \rightarrow \max,
\end{equation}
where the terminal time $t_1$ is free, and $L_g$ denotes left-shift by an element $g \in G$.

\begin{remark}
\label{rem-name}
It is natural to call problem~\eqref{podobryaev-eq-optimal-control-problem} {\it the (sub-)Lorentzian-Finsler problem}.
Indeed, if the antinorm $\alpha$ is defined by a nondegenerate quadratic form of the signature $(1, n)$, for example,
$$
\alpha|_{\Cone}(u) = \sqrt{u_0^2 - u_1^2 - \dots - u_n^2}, \qquad
\Cone = \{u = (u_0,u_1,\dots,u_n) \in \R^{n+1} \, | \, u_0^2 \geqslant u_1^2 + \dots + u_n^2, \, u_0 \geqslant 0 \},
$$
then we get the maximisation problem for Lorentzian length in Minkowski space $\R^{1, n}$.

If $\dim{\Cone} < \dim{\g}$, then velocities of admissible curves are located in some distribution of subspaces in the tangent bundle of the Lie group $G$.
By analogy with the sub-Riemannian case we will call the corresponding problem sub-Lorentzian.

In addition, in the problem statement~\eqref{podobryaev-eq-optimal-control-problem} an arbitrary continuous antinorm is considered. In the case of an arbitrary norm the corresponding minimization problem is called the Finsler problem. So, in our case it is natural to call it the Lorentzian-Finsler problem.
\end{remark}

\begin{remark}
\label{rem-difficult}
What is the attainable set for problem~\eqref{podobryaev-eq-optimal-control-problem}?
Is there an optimal solution of this problem for the given boundary conditions?
Generally speaking, these questions are not trivial.
For example, in the Lorentz problem on anti-de Sitter space~\cite{5} the necessary optimality condition (Pontryagin's maximum principle) identifies a set where globally optimal solutions can exist, contained in a nontrivial attainable set. Moreover, globally optimal solutions exist but have bounded length.
In some sense the opposite example is provided by the sub-Lorentzian problem on the Heisenberg group~\cite{3}, where globally optimal solutions exist on the entire attainable set.
In this work only extremal trajectories are studied, i.e., trajectories satisfying the necessary optimality condition --- Pontryagin's maximum principle.
We refer to paper~\cite{lokutsievskiy-podobryaev} for an existence theorem of optimal solution for the problem statement considered here.
\end{remark}

\section{\label{sec-extremal-trajectories}Extremal trajectories}

Let us recall some necessary definitions from convex analysis. Below $V^*$ denotes the dual space of the vector space $V$ and $\langle \, \cdot \, , \, \cdot \, \rangle$ denotes the canonical pairing of covectors and vectors.

\begin{definition}
\label{def-3}
The cone $\Cone^{\vee} = \{p \in V^* \, | \, p|_{\Cone} \leqslant 0 \} \subset V^*$ is called {\it the negative dual cone} for the cone $\Cone$.
{\it An antisphere} of radius $r$ for an antinorm $\alpha$ is the set $S_r = \{v \in V \, | \, \alpha(v) = r\}$.
{\it The dual function} for the antinorm $\alpha$ is the function $\alpha^{\vee} : V^* \rightarrow \R \cup \{-\infty\}$ such that
$$
\alpha^{\vee}(p) = -\sup\limits_{v \in S_1}{\langle p, v \rangle}, \qquad p \in V^*.
$$
\end{definition}

\begin{definition}
\label{def-4}
A cone is called {\it salient}, if it does not contain any nonzero subspaces.
\end{definition}

\begin{lemma}
\label{lemma-1}
Let $\Cone$ be a salient cone. Then if $p \in \ri{(\Cone^{\vee})}$ we have $p|_{\Cone \setminus 0} < 0$.
\end{lemma}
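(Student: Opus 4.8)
The plan is to prove the contrapositive-flavoured statement directly: take $p \in \ri(\Cone^\vee)$ and an arbitrary $v \in \Cone \setminus 0$, and show $\langle p, v \rangle < 0$. Since $p \in \Cone^\vee$ we already know $\langle p, v \rangle \leqslant 0$, so the whole content is to exclude the equality $\langle p, v \rangle = 0$. I would argue by contradiction: suppose $\langle p, v \rangle = 0$ for some fixed nonzero $v \in \Cone$.

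\medskip

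The key idea is to exploit the definition of relative interior (Definition~\ref{def-1}) applied to the cone $\Cone^\vee$. Because $\Cone$ is salient, its negative dual cone $\Cone^\vee$ has nonempty interior in $V^*$ and in particular is not contained in the hyperplane $\{p' : \langle p', v\rangle = 0\}$; so there exists $q \in \Cone^\vee$ with $\langle q, v \rangle < 0$. Now apply the defining property of $\ri(\Cone^\vee)$ to the point $p$ and this $q \neq p$: there are $w \in \Cone^\vee$ and $\lambda \in (0,1)$ with $p = \lambda q + (1-\lambda) w$. Pairing with $v$ gives
\[
0 = \langle p, v \rangle = \lambda \langle q, v \rangle + (1-\lambda)\langle w, v \rangle .
\]
Here $\lambda \langle q, v \rangle < 0$ strictly, while $(1-\lambda)\langle w, v \rangle \leqslant 0$ since $w \in \Cone^\vee$; hence the right-hand side is strictly negative, contradicting the left-hand side being $0$. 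Therefore $\langle p, v \rangle < 0$ for every $v \in \Cone \setminus 0$, which is the claim.

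\medskip

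The step I expect to need the most care is the very first one: producing a covector $q \in \Cone^\vee$ with $\langle q, v \rangle < 0$, i.e.\ showing $\Cone^\vee$ is not trapped inside the hyperplane $v^\perp$. This is exactly where the salient hypothesis on $\Cone$ is used. One clean way: $\Cone$ salient $\iff$ $\Cone^\vee$ has nonempty interior (a standard duality fact, e.g.\ via $\Cone^{\vee\vee} = \Cone$ and the bipolar theorem); and an open set cannot lie in a proper hyperplane, so some interior point $q$ of $\Cone^\vee$ satisfies $\langle q, v\rangle \neq 0$, and then $\langle q, v\rangle \leqslant 0$ forces $\langle q, v\rangle < 0$. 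Alternatively, one can avoid invoking nonempty interior and argue more hands-on: if every $q \in \Cone^\vee$ had $\langle q, v \rangle = 0$, then $\pm v$ would both lie in $\Cone^{\vee\vee} = \Cone$, so $\Cone$ would contain the line $\R v$, contradicting salientness. I would present this second argument since it is self-contained modulo the bipolar theorem, which is elementary for closed convex cones. The remaining computations are the routine convex-combination manipulation shown above.
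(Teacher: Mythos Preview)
Your proof is correct and follows essentially the same approach as the paper: assume $\langle p, v\rangle = 0$, use Definition~\ref{def-1} to write $p$ as a nontrivial convex combination inside $\Cone^\vee$, and combine this with salientness via the bipolar identity $\Cone^{\vee\vee} = \Cone$ to reach a contradiction. The paper merely reverses the order of the last two ingredients --- it first deduces from the convex combination that $\langle q, v\rangle = 0$ for every $q \in \Cone^\vee$, and only then invokes salientness to conclude $\R v \subset \Cone^{\vee\vee} = \Cone$ --- but this is the same argument.
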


\begin{proof}
Assume by contradiction that there exists nonzero $u \in \Cone$ such that $\langle p, u\rangle = 0$.
Since $p \in \ri{(\Cone^{\vee})}$, by Definition~\ref{def-1} for any $q \in \Cone^{\vee}$ there exist $r \in \Cone^{\vee}$ and $\lambda \in (0,1)$ such that
$p = \lambda q + (1-\lambda)r$. In particular, $\lambda \langle q, u \rangle + (1-\lambda)\langle r, u \rangle = \langle p, u \rangle = 0$.
Hence, $\langle q, u \rangle = 0$.
So, we get $\sspan{\{u\}} \subset \Cone^{\vee\vee} = \Cone$ in contradiction with the condition that the cone $\Cone$ is salient.
\end{proof}

\begin{lemma}
\label{lemma-2}
Assume that $\Cone$ is a salient cone and an antinorm $\alpha$ is continuous.
Then the dual function $\alpha^{\vee}$ is an antinorm associated with the dual cone $\Cone^{\vee}$ if and only if $\alpha^{\vee}|_{\rb{(\Cone^{\vee})}} = 0$.
\end{lemma}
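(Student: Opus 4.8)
The plan is to check the three defining properties of an antinorm (Definition~\ref{def-2}) for the function $\alpha^{\vee}$ with respect to the dual cone $\Cone^{\vee}$, and to see that property~(i) is the only one that is not automatic --- so the whole statement reduces to the single equation $\alpha^{\vee}|_{\rb{(\Cone^{\vee})}} = 0$.

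First I would dispense with homogeneity and concavity. Property~(ii) is immediate: $\alpha^{\vee}(\lambda p) = -\sup_{v\in S_1}\langle\lambda p, v\rangle = \lambda\,\alpha^{\vee}(p)$ for $\lambda>0$, since $\lambda$ pulls out of the pairing and the supremum is over a fixed set. Property~(iii), concavity, follows because $\alpha^{\vee}$ is, up to sign, a supremum of linear functions: $p\mapsto\sup_{v\in S_1}\langle p,v\rangle$ is convex as a pointwise supremum of the affine (indeed linear) maps $p\mapsto\langle p,v\rangle$, hence its negative is concave, i.e. $\alpha^{\vee}(p+p')\geqslant\alpha^{\vee}(p)+\alpha^{\vee}(p')$. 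Neither of these uses salience or continuity.

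Next I would treat the support of $\alpha^{\vee}$, i.e. the part of property~(i) saying $\alpha^{\vee}|_{V^*\setminus\Cone^{\vee}}=-\infty$ and $\alpha^{\vee}|_{\ri(\Cone^{\vee})}>0$ (the last clause being $\alpha^{\vee}|_{\rb(\Cone^{\vee})}=0$, which is exactly the hypothesis). For $p\notin\Cone^{\vee}$ there is a point $u\in\Cone$ with $\langle p,u\rangle>0$; scaling $u$ into $S_1$ is possible because $\Cone$ is salient and $\alpha$ is continuous and positive on $\ri\Cone$ (so $S_1$ meets every ray through $\ri\Cone$), and then scaling that point by large positive factors drives $\langle p,\cdot\rangle$ to $+\infty$ along $S_1$ — here one should note $S_1$ is unbounded, since $\alpha$ is positively homogeneous of degree one. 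Hence $\alpha^{\vee}(p)=-\infty$. For $p\in\ri(\Cone^{\vee})$, Lemma~\ref{lemma-1} gives $\langle p,v\rangle<0$ for all $v\in S_1\subset\Cone\setminus 0$, so $\sup_{v\in S_1}\langle p,v\rangle\leqslant 0$, whence $\alpha^{\vee}(p)\geqslant 0$; to get strict positivity one argues that the supremum is actually attained or bounded away from $0$ — this is where continuity of $\alpha$ (controlling the shape of $S_1$) and salience of $\Cone$ (via Lemma~\ref{lemma-1}) combine, and it is the one genuinely delicate point.

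Finally, assembling: the necessity direction is trivial since an antinorm associated with $\Cone^{\vee}$ vanishes on $\rb(\Cone^{\vee})$ by definition. For sufficiency, given the hypothesis $\alpha^{\vee}|_{\rb(\Cone^{\vee})}=0$, I combine it with the three facts above — homogeneity, concavity, the $-\infty$ behaviour off $\Cone^{\vee}$, and positivity on $\ri(\Cone^{\vee})$ — to conclude $\alpha^{\vee}$ satisfies Definition~\ref{def-2} for $\Cone^{\vee}$. The main obstacle I anticipate is precisely the strict inequality $\alpha^{\vee}|_{\ri(\Cone^{\vee})}>0$: showing that $\sup_{v\in S_1}\langle p,v\rangle$ does not merely stay $\leqslant 0$ but is strictly negative requires a compactness-type argument on a suitable cross-section of $S_1$, using continuity of $\alpha$ so that $S_1$ does not "escape to the boundary rays of $\Cone$" where the pairing would approach $0$; everything else is routine bookkeeping with positively homogeneous concave functions.
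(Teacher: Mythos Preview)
Your plan coincides with the paper's proof: both treat homogeneity and concavity as immediate, assert $\alpha^{\vee}|_{V^*\setminus\Cone^{\vee}}=-\infty$ and $\alpha^{\vee}|_{\Cone^{\vee}}\geqslant 0$ without detailed argument, and concentrate on the strict inequality $\alpha^{\vee}|_{\ri(\Cone^{\vee})}>0$, which the paper proves by exactly the compactness mechanism you anticipate --- a sequence $u_n\in S_1$ with $\langle p,u_n\rangle\to 0$ is bounded away from $0$ by continuity of $\alpha$, so after Euclidean normalisation a subsequence converges to some $u\in\Cone\setminus 0$ with $\langle p,u\rangle=0$, contradicting Lemma~\ref{lemma-1}.

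One wrinkle in your sketch of the $-\infty$ clause: the sentence ``scaling that point by large positive factors drives $\langle p,\cdot\rangle$ to $+\infty$ along $S_1$'' is not correct as written, since $\lambda v\in S_\lambda$, not $S_1$, for $v\in S_1$. The unboundedness of $S_1$ is indeed the relevant phenomenon, but it enters differently: one must move along $S_1$ toward a boundary ray $w\in\rb\Cone\setminus 0$ on which $\langle p,w\rangle>0$ (such $w$ exists because a salient cone of dimension at least two is the convex hull of its relative boundary), using that $\alpha(v+tw)/t\to\alpha(w)=0$. The paper simply asserts this step as clear, so this is a minor point rather than a gap in your overall strategy.
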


\begin{proof}
It is clear that the function $\alpha^{\vee}$ is homogeneous and concave.
Moreover, $\alpha^{\vee}|_{\Cone^{\vee}} \geqslant 0$ and $\alpha^{\vee}|_{V^* \setminus \Cone^{\vee}} = -\infty$.
Let us prove that $\alpha^{\vee}|_{\ri{(\Cone^{\vee})}} > 0$.
Indeed, if $p \in \ri{(\Cone^{\vee})}$, then by Lemma~\ref{lemma-1} $p|_{\Cone \setminus 0} < 0$.
Antisphere $S_1$ is separated of the hyperplane $\{u \in V \, | \, \langle p, u \rangle = 0\}$ by the closed boundary of the cone $\Cone$.
It follows that $\alpha^{\vee}(p) > 0$.
Really, if $\alpha^{\vee}(p) = 0$, then for a sequence $\varepsilon_n > 0$, $\varepsilon_n \rightarrow 0$ there is a sequence $u_n \in S_1$ such that
$-\varepsilon_n < \langle p, u_n \rangle \leqslant 0$.
The sequence $u_n$ is separated from zero, since the antinorm $\alpha$ is continuous.
Then the sequence $\frac{1}{|u_n|}$ is bounded, where $|\cdot|$ is the Euclidian norm in the space $V$.
Whence, $\langle p, \frac{u_n}{|u_n|} \rangle \rightarrow 0$.
Since the cone $\Cone$ is closed, then passing to a subsequence we get $\frac{u_n}{|u_n|} \rightarrow u \in \Cone \setminus 0$ and $\langle p, u \rangle = 0$.
So, we have a contradiction.
To satisfy all the requirements of Definition~\ref{def-2}, the condition $\alpha^{\vee}|_{\rb{(\Cone^{\vee})}} = 0$ must be satisfied.
\end{proof}

Let us define an extremal trajectory of the problem~\eqref{podobryaev-eq-optimal-control-problem}.

\begin{definition}
\label{def-5}
Consider the family of functions $H^{\nu}_u : T^*G \rightarrow \R$ on the cotangent bundle of the Lie group $G$, that depends on parameters $u \in \Cone \setminus 0$ and $\nu \in \{0,1\}$,
defined as follows:
$$
H^{\nu}_{u}(\lambda) = \langle L_{\pi(\lambda)}^* \lambda, u \rangle + \nu \alpha(u), \qquad \lambda \in T^*G.
$$
A Lipschitz curve $\lambda : [0, t_1] \rightarrow T^*G$ is called {\it an extremal},
if $t_1 > 0$ and there exist an admissible control $\hat{u} \in L^{\infty}([0, t_1], \, \Cone \setminus 0)$ and a number $\nu \in \{0,1\}$ such that
$(\lambda, \nu) \neq 0$ and for almost all $t \in [0, t_1]$ the following conditions are satisfied:
\begin{equation}
\label{podobryaev-eq-pmp}
\dot{\lambda}(t) = \vec{H}^{\nu}_{\hat{u}(t)}(\lambda(t)), \qquad
H^{\nu}_{\hat{u}(t)}(\lambda(t)) = \max\limits_{u \in \Cone \setminus 0}{H^{\nu}_{u}(\lambda(t))}, \qquad
H^{\nu}_{\hat{u}(t)}(\lambda(t)) = 0,
\end{equation}
where $\vec{H}^{\nu}_{\hat{u}(t)}$ is the Hamiltonian vector field corresponding to the Hamiltonian $H^{\nu}_{\hat{u}(t)}$ with respect to the canonical symplectic structure of the cotangent bundle $T^*G$.

If $\nu = 1$, then the curve $\lambda$ is called {\it a normal extremal}. If $\nu = 0$, then this curve is called {\it an abnormal extremal}.
Let $\pi : T^*G \rightarrow G$ be the natural projection.
The curve $\pi \circ \lambda : [0, t_1] \rightarrow G$ is called {\it a normal/abnormal extremal trajectory}.
An abnormal extremal trajectory is {\it strictly abnormal} if it is not a projection of any normal extremal.
\end{definition}

\begin{remark}
\label{rem-extr}
If $(\hat{g}, \hat{u})$ is an optimal process for problem~\eqref{podobryaev-eq-optimal-control-problem},
then in accordance to the Pontryagin maximum principle (see classical book~\cite[\S~3]{15} or a more modern presentation~\cite[Ch.~12]{16})
the curve $\hat{g}$ is the extremal trajectory corresponding to the control $\hat{u}$.
The conditions of the Pontryagin maximum principle (see, for example, \cite[\S~10.1]{16}) are satisfied automatically in our situation.
Namely, the left-invariance of problem~\eqref{podobryaev-eq-optimal-control-problem} on the Lie group implies smoothness of the admissible vector fields,
and continuity of the antinorm $\alpha$ implies continuity in the control of the quality functional.
\end{remark}

Note that in the case of an arbitrary continuous antinorm $\alpha$ the maximized Hamiltonian is, generally speaking, nonsmooth.
Therefore, the equations $\dot{\lambda}(t) = \vec{H}^{\nu}_{\hat{u}(t)}(\lambda(t))$ should be understood as a family of Hamiltonian vector fields.
In other words, the Hamiltonian system is non-autonomous and depends on control.

\begin{remark}
\label{rem-vert}
Since the Hamiltonians $H^{\nu}_{\hat{u}(t)}$ are left invariant, the Hamiltonian vector fields $\vec{H}^{\nu}_{\hat{u}(t)} $ are determined by their vertical components.
More precisely, we can assume~\cite[\S~18.3]{16} that the functions $H^{\nu}_{\hat{u}(t)}$ are defined on the dual space of the Lie algebra $\g^* = T^ *_{\id}G$
with coordinates $h_1 = \langle\,\cdot\, , e_1\rangle, \dots, h_n = \langle\,\cdot\, , e_n\rangle$, where $e_1,\dots,e_n$ is some basis of the space $\g$.
Then the extremal $\lambda(t)$ is determined by the conjugate subsystem (of the Hamiltonian system)
$\dot{h}_i(t) = \{H^{\nu}_{\hat{u}(t)}, h_i(t)\}$ on the space $\g^*$, where $h (t) = (h_1(t),\dots,h_n(t)) = L^*_{\pi(\lambda(t))} \lambda(t)$, and $\{\,\cdot\ , , \,\cdot\,\}$ is the standard Poisson structure on the space $\g^*$.
\end{remark}

Let $p \in \g^*$ be a covector. Introduce the notation $u_p = \arg\max\limits_{u \in \Cone \setminus 0}{H^{\nu}_u(p)}$.
Generally speaking, $u_p$ is not uniquely defined. The covector $h(0) \in \g^*$ and the choice of control values $u(t) = u_{h(t)}$ uniquely determines the extremal $\lambda(t)$ with the initial condition $\lambda(0) = h(0)$.

\begin{definition}
\label{def-6}
We will call a tangent vector at a point $g \in G$ {\it time-like} (respectively, {\it light-like}) if it is located in $L_{g *}\ri{\Cone}$
(respectively, in $L_{g *}\rb{\Cone}$). A trajectory is called {\it timelike}/{\it lightlike} if each of its tangent vectors is timelike/lightlike.
These terms come from Minkowski geometry and special relativity.
\end{definition}

\begin{theorem}
\label{th-1}
Consider optimal control problem~\eqref{podobryaev-eq-optimal-control-problem} defined by a closed convex salient cone $\Cone$ and an associated continuous antinorm $\alpha$.
Assume that the dual function $\alpha^{\vee} $ is an antinorm associated with the dual cone $\Cone^{\vee}$.
Then every extremal trajectory $g(\cdot)$ is a solution of the equation $\dot{g}(t) = L_{g(t) *} u_{h(t)}$, where
$\dot{h}_i(t) = \{H_{u_{h(t)}}, h_i(t)\}$ and $H_{u_{h(t)}} = \langle h(t) , u_{h(t)} \rangle$.
\medskip

\emph{(1)} For a normal extremal trajectory one of the two conditions is satisfied\emph{:}
\begin{enumerate}
\item[\emph{(a)}] $h(t) \in S^{\vee}_1 = \{ p \in \g^* \, | \, \alpha^{\vee}(p) = 1 \}$ for any $t$ and the trajectory is time-like;
\item[\emph{(b)}] $h(t) \in S^{\vee}_0 = \{ p \in \g^* \, | \, \alpha^{\vee}(p) = 0 \}$ for any $t$ and the trajectory is light-like.
\end{enumerate}

\emph{(2)} Tangent vectors of abnormal extremal trajectories are either light-like vectors or tangent vectors
of sub-Riemannian abnormal trajectories determined by the distribution of subspaces $L_{g *} \sspan{\Cone} \subset T_gG$.
In particular, light-like arcs are not strictly abnormal.
\end{theorem}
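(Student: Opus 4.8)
The plan is to apply Pontryagin's maximum principle exactly as formalised in Definition~\ref{def-5} and then to translate the maximum condition into convex duality. For the first assertion: for fixed $u\in\Cone\setminus 0$ the Hamiltonian $H^{\nu}_u$ differs from the linear one, $H_u(\lambda)=\langle L^*_{\pi(\lambda)}\lambda,u\rangle$, only by the additive constant $\nu\alpha(u)$, which is constant along the fibres of $\pi$; hence $\vec H^{\nu}_u=\vec H_u$. By the left-invariance reduction of Remark~\ref{rem-vert}, system~\eqref{podobryaev-eq-pmp} becomes the conjugate subsystem $\dot h_i(t)=\{H_{u_{h(t)}},h_i(t)\}$ on $\g^*$ together with $\dot g(t)=L_{g(t)*}u_{h(t)}$, where $u_{h(t)}$ is a measurable selection of $\arg\max_{u\in\Cone\setminus 0}H^{\nu}_u(h(t))$ (such a selection exists by the usual measurable-selection argument, using continuity of $\alpha|_{\Cone}$). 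Unwinding the remaining two conditions of~\eqref{podobryaev-eq-pmp}, for almost every $t$ one has $\langle h(t),u\rangle+\nu\alpha(u)\leqslant 0$ for all $u\in\Cone\setminus 0$, with equality at $u=u_{h(t)}$.

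For part~(1) take $\nu=1$, fix $t$, and write $p=h(t)$, $w=u_{h(t)}\in\Cone\setminus 0$; since $w$ lies in $\ri\Cone$ or in $\rb\Cone$, I distinguish these two cases and show they are precisely (a) and (b). If $w\in\ri\Cone$ then $\alpha(w)>0$, and after rescaling $\alpha(w)=1$, so $w\in S_1$ and $\langle p,w\rangle=-1$; applied to the points of $S_1$, the maximum inequality reads $\langle p,v\rangle\leqslant -1$ for all $v\in S_1$, so $\sup_{v\in S_1}\langle p,v\rangle=-1$ is attained at $w$, i.e.\ $\alpha^{\vee}(p)=1$ by Definition~\ref{def-3} and $p\in S^{\vee}_1$, while the velocity $L_{g(t)*}w$ with $w\in\ri\Cone$ is time-like. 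If $w\in\rb\Cone$ then $\alpha(w)=0$, so $\langle p,w\rangle=0$ with $w\in\Cone\setminus 0$; the maximum inequality also forces $\langle p,\cdot\rangle\leqslant 0$ on $\Cone$, i.e.\ $p\in\Cone^{\vee}$, whence Lemma~\ref{lemma-1} gives $p\notin\ri(\Cone^{\vee})$, so $p\in\rb(\Cone^{\vee})$; since $\alpha^{\vee}$ is assumed to be an antinorm associated with $\Cone^{\vee}$, Lemma~\ref{lemma-2} yields $\alpha^{\vee}(p)=0$ and $p\in S^{\vee}_0$, while $L_{g(t)*}w$ with $w\in\rb\Cone$ is light-like. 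Finally, $h(\cdot)$ is continuous with image in the disjoint union $S^{\vee}_1\sqcup S^{\vee}_0$; these are relatively closed subsets of $\Cone^{\vee}$, so the connected set $h([0,t_1])$ lies in exactly one of them, which fixes the causal type.

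For part~(2) take $\nu=0$, so $H^0_u(\lambda)=\langle L^*_{\pi(\lambda)}\lambda,u\rangle$ and the conditions of~\eqref{podobryaev-eq-pmp} say $\langle h(t),u\rangle\leqslant 0$ for all $u\in\Cone$ (so $h(t)\in\Cone^{\vee}$) with $\langle h(t),u_{h(t)}\rangle=0$, i.e.\ $u_{h(t)}\in\Cone\cap\Ker h(t)$. If $h(t)$ vanishes on $\sspan\Cone$ then $(g,h)$ satisfies the abnormal Hamiltonian system of the sub-Riemannian problem for the distribution $L_{g*}\sspan\Cone$ — whose equations involve no metric data — with admissible direction $u_{h(t)}\in\Cone\subset\sspan\Cone$, so $L_{g*}u_{h(t)}$ is a tangent vector of a sub-Riemannian abnormal trajectory. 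If $h(t)$ does not vanish on $\sspan\Cone$ then $\Cone\cap\Ker h(t)$ is a proper exposed face of $\Cone$, hence contained in $\rb\Cone$, so $u_{h(t)}\in\rb\Cone$ and $L_{g*}u_{h(t)}$ is light-like; this is the dichotomy. For the last assertion, along a light-like arc one has $h(t)\in\rb(\Cone^{\vee})$ and $\langle h(t),u_{h(t)}\rangle=0=\alpha(u_{h(t)})$, and using $\alpha^{\vee}|_{\rb(\Cone^{\vee})}=0$ one promotes the abnormal lift to a normal one, so the arc is not strictly abnormal.

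I expect two delicate points. First, the non-compactness of the control set $\Cone\setminus 0$: for an extremal one must know that the supremum in~\eqref{podobryaev-eq-pmp} is finite, attained and equal to $0$, and that a measurable maximiser exists. Second, the final claim of part~(2): the crux is to reconcile the lower bound on $\alpha^{\vee}(h(t))$ forced by the maximum condition with $h(t)\in\rb(\Cone^{\vee})$, and this is exactly where the hypothesis that $\alpha^{\vee}$ is an antinorm (equivalently $\alpha^{\vee}|_{\rb(\Cone^{\vee})}=0$, by Lemma~\ref{lemma-2}) is used — as it is also for the relative closedness of $S^{\vee}_1$ and $S^{\vee}_0$ in part~(1).
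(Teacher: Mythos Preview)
Your proof is correct and follows the same overall strategy as the paper (Pontryagin's maximum principle translated into convex duality via $\alpha^{\vee}$), but your case analysis for part~(1) is organised differently. The paper splits according to where the covector $p=h(t)$ sits, treating $p\in\ri(\Cone^{\vee})$ and $p\in\rb(\Cone^{\vee})$ separately; it then invokes Lemmas~\ref{lemma-3}--\ref{lemma-7} to reduce the maximisation to a one-parameter problem over the cone $p^{\vee}$ and to determine $u_p$ in each case. You instead split according to where the maximiser $w=u_{h(t)}$ sits, in $\ri\Cone$ or in $\rb\Cone$, and read off $\alpha^{\vee}(p)$ directly from the maximum inequality restricted to $S_1$. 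Your route is more economical --- it needs only Lemmas~\ref{lemma-1} and~\ref{lemma-2} and a couple of standard convex-analysis facts (an exposed face of a cone lies in its relative boundary; an upper semicontinuous concave function is continuous on the relative interior, which you use implicitly for the closedness of $S^{\vee}_1$). The paper's route is more self-contained via its auxiliary lemmas and, by also examining the covectors for which the maximum fails to exist, makes the necessary location of $h(t)$ more explicit. Part~(2) and the connectedness argument separating $S^{\vee}_1$ from $S^{\vee}_0$ are handled essentially identically in both proofs; the paper phrases the separation via the closed superlevel set $S^{\vee}_{\geqslant 1}$, which is the cleanest way to justify your ``relatively closed'' claim.
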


The proof of Theorem~\ref{th-1} requires several additional lemmas.

For a covector $p \in V^*$ define the following sets:
\begin{equation}
\label{podobryaev-eq-def-dual}
p^{\vee}_r = \{v = \arg\,\max\limits_{u \in S_r}{\langle p, u \rangle}\}, \qquad p^{\vee} = \bigcup\limits_{r \geqslant 0}{p^{\vee}_r}.
\end{equation}

\begin{lemma}
\label{lemma-3}
Let $\Cone$ be a salient cone. Then if $p \in \ri{(\Cone^{\vee})}$ we have $p^{\vee}_0 = 0$.
\end{lemma}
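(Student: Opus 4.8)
The plan is to reduce the statement to Lemma~\ref{lemma-1} together with the explicit description of the zero antisphere. First I would observe that, by part (i) of Definition~\ref{def-2}, one has $S_0 = \{v \in V \mid \alpha(v) = 0\} = \rb{\Cone}$, since $\alpha$ is positive on $\ri{\Cone}$, vanishes on $\rb{\Cone}$, and equals $-\infty$ outside $\Cone$. Moreover, because $\Cone$ is salient, the origin lies in $\rb{\Cone}$: if $0$ were in $\ri{\Cone}$, then Definition~\ref{def-1} applied to any $v \in \Cone \setminus 0$ would produce $w \in \Cone$ and $\lambda \in (0,1)$ with $0 = \lambda v + (1-\lambda) w$, i.e. $w$ is a positive multiple of $-v$, forcing $-v \in \Cone$ and contradicting salience. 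Hence $0 \in S_0$ and $\langle p, 0 \rangle = 0$.

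Next I would invoke Lemma~\ref{lemma-1}: since $p \in \ri{(\Cone^{\vee})}$ and $\Cone$ is salient, $\langle p, u \rangle < 0$ for every $u \in \Cone \setminus 0$, and in particular for every nonzero $u \in \rb{\Cone} = S_0$. Combining this with $\langle p, 0 \rangle = 0$, the quantity $\sup_{u \in S_0}{\langle p, u \rangle}$ equals $0$ and is attained only at $u = 0$. By the definition~\eqref{podobryaev-eq-def-dual} of $p^{\vee}_0$ as the set of maximizers of $\langle p, \cdot \rangle$ over $S_0$, this yields $p^{\vee}_0 = \{0\} = 0$, which is exactly the assertion.

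There is essentially no serious obstacle here; the only point requiring a little care is the identification $S_0 = \rb{\Cone}$ together with the fact that $0 \in \rb{\Cone}$, both of which follow immediately from the definitions once salience is used. The remaining content is a direct application of Lemma~\ref{lemma-1}.
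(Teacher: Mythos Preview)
Your proof is correct and follows the same route as the paper: both reduce the claim to Lemma~\ref{lemma-1}, using that $S_0=\rb{\Cone}$ and that the linear functional $p$ is strictly negative on $\Cone\setminus 0$. The paper states the argument tersely by contradiction (a nonzero maximizer in $p^{\vee}_0$ would give $\langle p,u\rangle=0$ with $u\in\rb{\Cone}\setminus 0$, contradicting Lemma~\ref{lemma-1}), whereas you write the direct version and spell out explicitly why $0\in\rb{\Cone}$; the content is the same.
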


\begin{proof}
If $p^{\vee}_0 \neq 0$, then there exists $u \in \rb{\Cone} \setminus 0$ such that $\langle p, u \rangle = 0$, in contradiction with Lemma~\ref{lemma-1}.
\end{proof}

\begin{lemma}
\label{lemma-4}
If an antinorm $\alpha$ is continuous, then for any $p \in \Cone^{\vee}$ the set $p^{\vee}$ is a closed convex cone.
\end{lemma}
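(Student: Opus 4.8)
The plan is to analyze the structure of $p^\vee$ by first understanding $p^\vee_r$ for fixed $r>0$ and then seeing how these sets fit together as $r$ varies. First I would observe that, by homogeneity of $\alpha$, we have $S_r = r S_1$ for every $r>0$, and since the pairing $\langle p,\cdot\rangle$ is linear, $\arg\max_{u\in S_r}\langle p,u\rangle = r\cdot\arg\max_{u\in S_1}\langle p,u\rangle$; that is, $p^\vee_r = r\, p^\vee_1$ for all $r>0$. Together with Lemma~\ref{lemma-3} (or rather the possibility $p^\vee_0\ne 0$ when $p\in\rb(\Cone^\vee)$, handled separately), this shows that $p^\vee = \bigcup_{r\geqslant 0} r\, p^\vee_1 \cup p^\vee_0$ is a union of rays, hence it is immediately a cone once we know the edge case at $r=0$ is also a cone through the origin — and $p^\vee_0$, being $\{v=\arg\max_{u\in S_0}\langle p,u\rangle\}$ with $S_0\subset\rb\Cone$ a cone, is itself closed under positive scaling. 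So the cone property reduces to understanding one antisphere.

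Next I would establish convexity. Since $p\in\Cone^\vee$ means $\langle p,u\rangle\leqslant 0$ on $\Cone$, the maximum of $\langle p,\cdot\rangle$ over $S_1$ is some value $-c\leqslant 0$ with $c=\alpha^\vee(p)$, and $p^\vee_1 = S_1\cap\{v\mid\langle p,v\rangle = -c\}$ is the intersection of the antisphere $S_1$ with a supporting affine hyperplane. The key point is that $S_1$, as the boundary antisphere of a continuous antinorm, bounds a \emph{convex} region: concavity of $\alpha$ implies that $\{u\in\Cone\mid\alpha(u)\geqslant 1\}$ is convex, and $S_1$ is (part of) its boundary. Intersecting a convex set with a supporting hyperplane gives a convex (exposed) face, so $p^\vee_1$ is convex. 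I would then argue that the full cone $p^\vee = \bigcup_{r\geqslant 0} p^\vee_r$ is the cone generated by the convex set $p^\vee_1$ (adjoining the vertex and the $r=0$ piece), and a cone over a convex base is convex; equivalently, one can check directly that if $v\in p^\vee_r$ and $w\in p^\vee_s$ then $v+w\in p^\vee_{r+s}$ using the superadditivity $\alpha(v+w)\geqslant\alpha(v)+\alpha(w)$ on $\Cone$ combined with the optimality of $v,w$ — this is the cleanest route and avoids having to separate the base from the vertex.

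Finally, closedness. For fixed $r>0$, $p^\vee_r$ is closed because it is the intersection of the closed set $S_r$ (antisphere of a continuous antinorm, hence a level set of a continuous function on the closed cone $\Cone$) with the closed hyperplane $\{\langle p,\cdot\rangle = -rc\}$. The issue is that $p^\vee = \bigcup_{r\geqslant 0} p^\vee_r$ is an infinite union; here I would use continuity of $\alpha$ together with the fact that the antispheres vary continuously. Concretely, take a sequence $v_n\in p^\vee_{r_n}\to v$; I want $v\in p^\vee$. If the $r_n$ stay in a compact subset of $(0,\infty)$ one passes to a convergent subsequence $r_n\to r>0$ and uses continuity of $\alpha$ to get $\alpha(v)=r$ and $\langle p,v\rangle=-rc$, so $v\in p^\vee_r$. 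The main obstacle — the part requiring genuine care — is the degenerate cases $r_n\to 0$ and $r_n\to\infty$: if $r_n\to 0$ then $\alpha(v_n)\to 0$ forces (by continuity, on the closed cone) $\alpha(v)=0$, i.e.\ $v\in\rb\Cone$, and one checks $v\in p^\vee_0$; if $r_n\to\infty$ with $v_n$ bounded, then $\alpha(v_n)=r_n\to\infty$ while $v_n$ is bounded, contradicting continuity of $\alpha$ on $\Cone$ (a continuous function is bounded on bounded sets), so this case does not occur. This boundedness/continuity argument, essentially the same mechanism already used in the proof of Lemma~\ref{lemma-2} to keep minimizing sequences away from zero and infinity, is what I expect to be the technical heart of the proof.
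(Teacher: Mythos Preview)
Your approach is workable but differs from the paper's, and it has some gaps as written. The paper bypasses the layer-by-layer analysis entirely: it sets $M=\sup_{u\in S_1}\langle p,u\rangle\leqslant 0$ and introduces the single auxiliary function $F(u)=\langle p,u\rangle - M\alpha(u)$, which is continuous, homogeneous and concave on $\Cone$. Since $F|_{S_1}\leqslant 0$ by construction, homogeneity and continuity force $F|_\Cone\leqslant 0$, and one checks that $F$ vanishes on $\Cone$ exactly on $p^\vee$. Hence $p^\vee=\Cone\cap\{u:F(u)\geqslant 0\}$, an intersection of two closed convex sets --- convexity and closedness fall out simultaneously with no case splits.

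Your route, by contrast, needs care you have not supplied. Both convexity arguments are inaccurate as stated. The claim that $p^\vee_1$ is convex fails when $p|_\Cone=0$: then $p^\vee_1=S_1$, which for, say, $\alpha=\min(x_1,\dots,x_n)$ on an orthant is \emph{not} convex. The alternative claim $v+w\in p^\vee_{r+s}$ also fails in this regime: superadditivity gives only $\alpha(v+w)\geqslant r+s$, and the reverse inequality requires the observation $\langle p,u\rangle\leqslant M\alpha(u)$ applied to $u=v+w$, which yields $\alpha(v+w)\leqslant r+s$ only when $M<0$. So you must treat the case $M=0$ separately (splitting further into $p^\vee_1=\varnothing$ versus $p|_\Cone=0$); this can be done, but the bookkeeping is precisely what the function $F$ absorbs in the paper's argument. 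Your closedness argument is also more elaborate than needed: if $v_n\to v\in\Cone$ then continuity of $\alpha$ gives $r_n=\alpha(v_n)\to\alpha(v)$ automatically, so the case $r_n\to\infty$ never arises.
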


\begin{proof}
It is clear that since the antinorm $\alpha$ is homogeneous, then the set $p^{\vee}$ is a cone. Moreover, $p^{\vee}_r = r p^{\vee}_1$ for $r > 0$.

Let $M = \sup\limits_{u \in S_1}{\langle p, u \rangle}$. It follows that $M \leqslant 0$ and the function $F(u) = \langle p, u \rangle - M \alpha(u)$ is a continuous homogeneous concave function on the cone $\Cone$. Obviously, $F|_{S_1} \leqslant 0$. So, by homogeneity we get $F|_{\ri{\Cone}} \leqslant 0$.
Since the function $F$ is continuous, we have $F|_{\Cone} \leqslant 0$.
The function $F$ is concave, this implies that the set $D = \{ u \in V \, | \, F(u) \geqslant 0\}$ is convex and closed.
Moreover, the function $F|_{\Cone}$ vanishes exactly on the set $p^{\vee}$.
So, the set $p^{\vee} = \Cone \cap D$ is convex and closed as an intersection of convex and closed sets.
\end{proof}

\begin{lemma}
\label{lemma-5}
If $p \in \Cone^{\vee}$ and $p|_{\Cone} \neq 0$, then $p|_{\ri{\Cone}} < 0$.
\end{lemma}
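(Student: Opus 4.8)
The plan is to argue directly from the definition of the relative interior (Definition~\ref{def-1}), using the fact that membership in $\Cone^{\vee}$ already forces $\langle p, \cdot \rangle \leqslant 0$ on all of $\Cone$. Since $p|_{\Cone} \leqslant 0$ but $p|_{\Cone} \neq 0$, there must exist a witness $v_0 \in \Cone$ with $\langle p, v_0 \rangle < 0$. The key observation is then that every point of $\ri{\Cone}$ can be written as a genuine convex combination involving $v_0$, and such a combination propagates the strict inequality.

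Concretely, I would fix $u \in \ri{\Cone}$ and use $p \in \Cone^{\vee}$ to note $\langle p, u \rangle \leqslant 0$, so it suffices to exclude $\langle p, u \rangle = 0$. If $u = v_0$, this is immediate from $\langle p, v_0 \rangle < 0$. Otherwise $v_0 \neq u$, and applying Definition~\ref{def-1} with $v = v_0$ produces $w \in \Cone$ and $\lambda \in (0,1)$ with $u = \lambda v_0 + (1-\lambda) w$. Pairing with $p$,
$$
\langle p, u \rangle = \lambda \langle p, v_0 \rangle + (1-\lambda) \langle p, w \rangle ,
$$
where $\lambda \langle p, v_0 \rangle < 0$ strictly (since $\lambda > 0$) and $(1-\lambda)\langle p, w \rangle \leqslant 0$ (since $w \in \Cone$ and $p \in \Cone^{\vee}$). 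Hence $\langle p, u \rangle < 0$, contradicting $\langle p, u \rangle = 0$. Therefore $p|_{\ri{\Cone}} < 0$.

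I expect no substantive obstacle in this argument; the only points that need a moment of attention are the existence of the witness $v_0$ (which is exactly where the hypothesis $p|_{\Cone} \neq 0$ is used) and the harmless case split $u = v_0$ versus $u \neq v_0$, forced by the fact that Definition~\ref{def-1} quantifies over $v \neq u$. Structurally this mirrors the proof of Lemma~\ref{lemma-1}, with the span obstruction there replaced here by the strict-negativity witness.
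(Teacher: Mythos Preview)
Your proof is correct and follows essentially the same approach as the paper's: both pick a witness $v_0 \in \Cone$ with $\langle p, v_0 \rangle < 0$, invoke Definition~\ref{def-1} to write the relative-interior point as a genuine convex combination involving that witness, and then use linearity of $p$. The paper phrases the final step as ``$p$ takes values of opposite signs at the two endpoints, contradicting $p \in \Cone^{\vee}$,'' whereas you compute $\langle p, u \rangle < 0$ directly; these are the same idea.
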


\begin{proof}
Assume by contradiction that there exists $v \in \ri{\Cone}$ such that $\langle p, v \rangle = 0$.
Then by Definition~\ref{def-1} for $w_1 \in \Cone$, $\langle p, w_1 \rangle \neq 0$ there exists $w_2 \in \Cone$ such that
$v$ is in the interior of the segment connecting the points $w_1$ and $w_2$.
It follows that the linear function $p$ takes values of opposite signs at the points $w_1, w_2 \in \Cone$, in contradiction with $p \in \Cone^{\vee}$.
\end{proof}

\begin{lemma}
\label{lemma-6}
If an antinorm $\alpha$ is continuous and $p \in \Cone^{\vee}$, $p|_{\Cone} \neq 0$, then this antinorm $\alpha$ is linear on the cone $p^{\vee}$.
\end{lemma}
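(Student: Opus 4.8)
The plan is to reuse the auxiliary function $F$ from the proof of Lemma~\ref{lemma-4}. Put $M = \sup_{u \in S_1}{\langle p, u \rangle}$ and $F(u) = \langle p, u \rangle - M\alpha(u)$ on $\Cone$. As established there, $p^{\vee}$ is exactly the zero set of $F|_{\Cone}$, i.e. $M\,\alpha(u) = \langle p, u \rangle$ for every $u \in p^{\vee}$. Since $S_1 \subset \ri{\Cone}$ (because $\alpha$ vanishes on $\rb{\Cone}$ and equals $-\infty$ off $\Cone$) and, by Lemma~\ref{lemma-5}, the hypothesis $p|_{\Cone} \neq 0$ gives $p|_{\ri{\Cone}} < 0$, we conclude $M \leqslant 0$. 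These observations reduce everything to reading off the consequences of the identity $M\,\alpha(u) = \langle p, u \rangle$ on $p^{\vee}$.

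First I would treat the case $M \neq 0$. Then for every $u \in p^{\vee}$ we get $\alpha(u) = M^{-1}\langle p, u \rangle$, so the restriction $\alpha|_{p^{\vee}}$ coincides with the restriction of the linear functional $M^{-1}p \in V^*$; in particular it is linear. Note this covers all of $p^{\vee}$, including the points it may contain on $\rb{\Cone}$ (where both sides of the identity vanish). The remaining case is $M = 0$: here the identity forces $\langle p, u \rangle = 0$ for every $u \in p^{\vee}$, and Lemma~\ref{lemma-5} again says the hyperplane $\{\langle p, \cdot\, \rangle = 0\}$ meets $\Cone$ only inside $\rb{\Cone}$; hence $p^{\vee} \subset \rb{\Cone}$ and $\alpha|_{p^{\vee}} \equiv 0$, which is linear.

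The only mildly subtle point I expect is the degenerate case $M = 0$ (a ``light-like'' covector), where one cannot divide by $M$ and instead must invoke Lemma~\ref{lemma-5} to confine $p^{\vee}$ to the relative boundary, on which $\alpha$ vanishes. Everything else is immediate from the description of $p^{\vee}$ as the zero level set of $F$ obtained in Lemma~\ref{lemma-4}, so no further technical work beyond that case distinction should be required.
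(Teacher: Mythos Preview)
Your argument is correct and takes a genuinely different route from the paper's. The paper verifies additivity $\alpha(u+v)=\alpha(u)+\alpha(v)$ on $p^{\vee}$ by a three-way case split according to whether $u,v$ lie in $p^{\vee}_0$ or not; the mixed case ($\alpha(u)\neq 0$, $\alpha(v)=0$) needs a separate argument, using Lemma~\ref{lemma-5} and the definition of $p^{\vee}_1$ to rule out $\alpha(u+v)>1$. You instead lift from the proof of Lemma~\ref{lemma-4} the identity $\langle p,u\rangle = M\,\alpha(u)$ on $p^{\vee}$ and read off linearity in one stroke, splitting only on whether $M=0$. Your approach is shorter and in fact yields a little more: when $M\neq 0$ it exhibits $\alpha|_{p^{\vee}}$ as the restriction of a single linear functional $M^{-1}p$, not merely an additive map. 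The paper's argument, by contrast, is more self-contained in that it does not rely on the internal characterization of $p^{\vee}$ as the zero set of $F$ asserted inside the proof of Lemma~\ref{lemma-4}.
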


\begin{proof}
It is sufficient to prove that $\alpha(u+v) = \alpha(u) + \alpha(v)$ for any $u, v \in p^{\vee}$, since the antinorm $\alpha$ is homogeneous.

If $u, v \in p^{\vee}_0$, this is obvious.
If $u, v \notin p^{\vee}_0$, then
$$
\alpha(u + v) = \alpha(\alpha(u) \bar{u} + \alpha(v) \bar{v}) = (\alpha(u)  + \alpha(v))\alpha(\lambda\bar{u} + \mu \bar{v}),
$$
$$
\bar{u} = \frac{u}{\alpha(u)}, \qquad \bar{v} = \frac{v}{\alpha(v)}, \qquad
\lambda = \frac{\alpha(u)}{\alpha(u)  + \alpha(v)}, \qquad
\mu = \frac{\alpha(u)}{\alpha(u)  + \alpha(v)}.
$$
Lemma~\ref{lemma-4} implies $\lambda\bar{u} + \mu\bar{v} \in p^{\vee}_1$ and we get the required equality.

It remains to consider the case $\alpha(u) \neq 0$, $\alpha(v) = 0$. By Lemma~\ref{lemma-4} we may assume that $\alpha(u) = 1$.
Moreover, by Lemma~\ref{lemma-5} we have $\langle p, u \rangle < 0$.

Note that $\langle p, v \rangle = 0$. Indeed, in the opposite case for $\lambda \in (0, 1)$ we have
$\langle p, \lambda v \rangle > \langle p, v \rangle$ and $\alpha(\lambda v) = 0$, it follows that $v \notin p_0^{\vee}$.

Next, since the antinorm is concave we get $\alpha(u+v) \geqslant \alpha(u) + \alpha(v) = 1$.
Assume that $\alpha(u+v) > 1$, then there exists $\lambda \in (0,1)$ such that $\alpha(\lambda(u+v)) = 1$.
Moreover, $\langle p, \lambda(u+v) \rangle = \lambda \langle p, u \rangle > \langle p, u \rangle$,
in contradiction with $u \in p_1^{\vee}$. So, we have $\alpha(u+v) = 1$.
\end{proof}

\begin{lemma}
\label{lemma-7}
If $p \in \rb{(\Cone^{\vee})}$ and $p|_{\Cone} \neq 0$,
then $p|_{\ri{\Cone}} < 0$ and there exists nonzero $u \in \rb{\Cone}$ such that $\langle p, u \rangle = 0$.
\end{lemma}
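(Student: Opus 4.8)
The first assertion is immediate: $p \in \rb{(\Cone^{\vee})} \subset \Cone^{\vee}$ with $p|_{\Cone} \neq 0$, so Lemma~\ref{lemma-5} gives $p|_{\ri{\Cone}} < 0$ without further work. Thus the substantive claim is the existence of the null vector $u$, and the plan is to extract it from the fact that $p \notin \ri{(\Cone^{\vee})}$.

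First I would apply Definition~\ref{def-1} to the cone $\Cone^{\vee}$: since $p \notin \ri{(\Cone^{\vee})}$ there is $q \in \Cone^{\vee}$ with $q \neq p$ such that $p$ admits no representation $p = \lambda q + (1-\lambda)r$ with $r \in \Cone^{\vee}$, $\lambda \in (0,1)$. Put $w = p - q \neq 0$. A one-line rearrangement (if $p + tw \in \Cone^{\vee}$, take $r = p + tw$ and $\lambda = t/(1+t) \in (0,1)$, which yields $\lambda q + (1-\lambda)r = p$, contradicting the choice of $q$) shows this is equivalent to $p + tw \notin \Cone^{\vee}$ for every $t > 0$. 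Hence for each $n$ there is $v_n \in \Cone$ with $\langle p + \tfrac1n w, v_n \rangle > 0$, and after rescaling we may assume $|v_n| = 1$, where $|\cdot|$ is the Euclidean norm on $V$. Now $p \in \Cone^{\vee}$ forces $\langle p, v_n \rangle \leqslant 0$, while $\langle p, v_n \rangle > -\tfrac1n\langle w, v_n \rangle \geqslant -|w|/n$ by Cauchy--Schwarz, so $\langle p, v_n \rangle \to 0$. Since $\Cone$ is closed, passing to a subsequence gives $v_n \to u \in \Cone$ with $|u| = 1$ and $\langle p, u \rangle = 0$. Finally $u \in \Cone \setminus 0$, so by the already-established inequality $p|_{\ri{\Cone}} < 0$ we cannot have $u \in \ri{\Cone}$; hence $u \in \rb{\Cone}$, which completes the proof.

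The only step needing care is the limiting argument: the $v_n$ must be normalised \emph{before} extracting a convergent subsequence, so that the limit $u$ is nonzero, and closedness of $\Cone$ is what keeps $u$ in the cone. Everything else is routine bookkeeping. One could instead argue topologically, choosing $p_n \to p$ with $p_n \notin \Cone^{\vee}$ and repeating the compactness step, but routing the argument through Definition~\ref{def-1} keeps it uniform with the proofs of Lemmas~\ref{lemma-1} and~\ref{lemma-5}.
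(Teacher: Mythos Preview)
Your proof is correct and follows essentially the same route as the paper's: both invoke Definition~\ref{def-1} to produce a direction $w = p - q$ along which $p + tw \notin \Cone^{\vee}$, then extract a normalized convergent subsequence from the resulting vectors $v_n \in \Cone$ to obtain a nonzero $u \in \Cone$ with $\langle p, u \rangle = 0$. The only cosmetic difference is that the paper wraps this in a double contradiction (assume no such $u$, conclude $p \in \ri{(\Cone^{\vee})}$), whereas you argue directly and use the already-established inequality $p|_{\ri{\Cone}} < 0$ at the end to place $u$ on $\rb{\Cone}$.
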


\begin{proof}
Lemma~\ref{lemma-5} implies $p|_{\ri{\Cone}} < 0$.
Assume by contradiction that $p|_{\rb{\Cone} \setminus 0} < 0$, then $p|_{\Cone \setminus 0} < 0$.
Take arbitrary $q \in \Cone^{\vee}$.
Assume that for any $\varepsilon > 0$ there exists $v \in \Cone$ such that
$\langle p+\varepsilon(p-q), v \rangle > 0$.
This implies that for a sequence $\varepsilon_n > 0$, $\varepsilon_n \rightarrow 0$ there exists a sequence $v_n \in \Cone \setminus 0$ such that
\begin{equation}
\label{podobryaev-eq-inequality}
\langle p, v_n \rangle > \frac{\varepsilon_n}{1 + \varepsilon_n}\langle q, v_n \rangle.
\end{equation}
Since the functions $p$ and $q$ are linear, we may assume that the sequence $v_n$ is bounded and separated from zero.
The cone $\Cone$ is closed, so, there is a subsequence that converges to an element $v \in \Cone \setminus 0$.
Inequality~\eqref{podobryaev-eq-inequality} implies $\langle p, v \rangle \geqslant 0$.
But $p|_{\Cone \setminus 0} < 0$. Hence, for $q \in \Cone^{\vee}$ there exists $\varepsilon > 0$ such that $(p+\varepsilon(p-q))|_{\Cone} \leqslant 0$,
i.e., $p+\varepsilon(p-q) \in \Cone^{\vee}$.
By Definition~\ref{def-1} we have $p \in \ri{\Cone}$. We get a contradiction.
\end{proof}

\begin{proof}[Proof of Theorem~\emph{\ref{th-1}}]

(1) Assume that $\nu = 1$.
Let us prove that a covector $p \in \g^*$ defines a corresponding control $u_p$ if and only if
$p \in S_1^{\vee}$ or $p \in S_0^{\vee}$, $p|_{\Cone} \neq 0$ and in this case the control $u_p$ is time-like or light-like, respectively.

Consider the case $p \in \ri{(\Cone^{\vee})}$, then by Lemma~\ref{lemma-3} we have $p^{\vee}_0 = 0$. Equalities~\eqref{podobryaev-eq-def-dual} imply
$\arg\max\limits_{u \in S_r}{H^{1}_u} = \arg\max\limits_{u \in S_r}{\langle p, u \rangle} = p^{\vee}_r$.
Hence, by Lemmas~\ref{lemma-3}--\ref{lemma-4} the corresponding control
\begin{equation}
\label{podobryaev-eq-mu}
u_p = \arg\max\limits_{u \in p^{\vee} \setminus 0}{(\langle p, u \rangle + \alpha(u))} = \bar{u} \cdot \arg\max\limits_{\mu > 0}{\mu(\langle p, \bar{u} \rangle + 1)},
\end{equation}
where $\bar{u} \in p^{\vee}_1$.

If $\langle p, \bar{u} \rangle + 1 \neq 0$, then there is no maximum with respect to the variable $\mu > 0$.
If $\langle p, \bar{u} \rangle + 1 = 0$, then from Definition~\ref{def-3} it follows that $p \in S^{\vee}_1$.
Then $\mu$ can be any positive number and $u_p \in p^{\vee} \setminus 0 \subset \ri{\Cone}$.
So, $p \in S^{\vee}_1$ and the corresponding control $u_p$ is time-like.

Consider now the case $p \in \rb{(\Cone^{\vee})}$ and $p|_{\Cone} \neq 0$. From Lemma~\ref{lemma-7} it follows that $p^{\vee}_0 \neq 0$.
Let us prove that $p^{\vee} = p^{\vee}_0$.
If $p^{\vee} \neq p^{\vee}_0$, then Lemma~\ref{lemma-4} implies $p^{\vee}_1 \neq \varnothing$.
Whence, $\alpha^{\vee}(p) \neq 0$. But due to the condition of the theorem the function $\alpha^{\vee}$ is an antinorm, then by Lemma~\ref{lemma-2} we have $\alpha^{\vee}(p) = 0$. We get a contradiction.
So, $p^{\vee} = p_0^{\vee}$ and the control $u_p$ is light-like.

It remains to note that in the case $p \notin \Cone^{\vee}$ or $p|_{\Cone} = 0$ there is no maximum of the expression $H^1_u$ with respect to the variable $u \in \Cone \setminus 0$.

So, the trajectory $h(\cdot)$ of the conjugate subsystem must lie in the set $S_1^{\vee} \cup S_0^{\vee}$.
If $h(t) \in S_1^{\vee}$, then the corresponding tangent vector of the extremal trajectory is time-like.
If $h(t) \in S_0^{\vee}$, then this tangent vector is light-like.

Notice that the antinorm $\alpha^{\vee}$ is upper semi-continuous.
In particular, the set $S^{\vee}_{\geqslant 1} = \{p \in \g^* \, | \, \alpha^{\vee}(p) \geqslant 1\}$ is closed.
The continuous curve $h(t)$ lies either in the closed set $S^{\vee}_{\geqslant 1}$ or in the closed set $S^{\vee}_0$.
It follows that it is located either in the set $S_1^{\vee}$ or in the set $S_0^{\vee}$.
Since $\alpha^{\vee}(h(t)) = \const$, this implies the alternative (a) or (b) from the theorem statement.

It remains to note that we can replace the Hamiltonian $\langle h(t), u_{h(t)} \rangle + \alpha(u_{h(t)})$ by the Hamiltonian $H_{u_{h(t)}} = \langle h(t), u_{h(t)} \rangle$.
\medskip

(2) Assume now that $\nu = 0$. The Hamiltonian equals $H^0_u(p) = \langle p, u \rangle$, it has a maximum with respect to the variable $u \in \Cone \setminus 0$ if and only if $p \in \rb{(\Cone^{\vee})}$.

If $p \in \rb{(\Cone^{\vee})}$ and $p|_{\Cone} \neq 0$, then this maximum equals zero and is achieved only on the set $p^{\vee}_0 \setminus 0$ by Lemma~\ref{lemma-7}.

If $p|_{\Cone} = 0$, then this maximum is achieved at any point of the set $\Cone \setminus 0$.

In the first case the tangent vector of the extremal trajectory is light-like.
In the second case the tangent vector coincides with a tangent vector of a sub-Riemannian abnormal trajectory.
In addition $\alpha^{\vee}(h(t)) \equiv 0$. This implies that each tangent vector of an abnormal trajectory is either light-like
or coincides with a tangent vector of a sub-Riemannian abnormal trajectory.
\end{proof}

\section{\label{sec-crl-ex}Corollaries and examples}

Let us give some examples showing the importance of the conditions of Theorem~\ref{th-1}.

\begin{remark}
\label{rem-zero-control}
If in problem~\eqref{podobryaev-eq-optimal-control-problem} we assume control equal to zero, then
under the conditions of Theorem~\ref{th-1} a normal extremal trajectory will be time-like up to parametrisation if and only if
for the trajectory $h(\cdot)$ of the conjugate subsystem the inequality $\alpha^{\vee}(h(t)) \geqslant 1$ holds for all $t$.
This follows easily from the closedness of the set $S_{\geqslant 1}^{\vee}$
and from the case $p \in \ri{(\Cone^{\vee})}$ of part~(1) of the proof of Theorem~\ref{th-1}.
Namely, for $\langle p, \bar{u} \rangle + 1 < 0$ the maximum of expression~\eqref{podobryaev-eq-mu} on the set $\mu \geqslant 0$ is achieved at the point $\mu = 0$.
\end{remark}

\begin{remark}
\label{rem-salient}
We assume that the cone $\Cone$ is salient. This condition is essential for Theorem~\ref{th-1}.
Indeed, if the cone $\Cone$ contains some nonzero subspace $W$, then for any $p \in \Cone^{\vee}$ we have $p|_W = 0$.
Otherwise there exists $w \in W$ such that $\langle p, w \rangle < 0$, then $\langle p, -w \rangle > 0$, but $-w \in \Cone$, we get a contradiction.
This means that $p^{\vee}_0 \supset W$. Then the extremal trajectory with the initial covector $p$ can have both time-like tangent vectors and light-like ones.
\end{remark}

\begin{definition}
\label{def-7}
We call a continuous antinorm $\alpha$ associated with a closed convex cone $\Cone$ {\it linear on a subcone adjacent to the boundary}
if there exists $p \in \rb{(\Cone^{\vee})}$ such that $p|_{\Cone} \neq 0$ and $\ri{(p^{\vee})} \subset \ri{\Cone}$
(by Lemma~\ref{lemma-6} the function $\alpha|_{p^{\vee}}$ is linear).
\end{definition}

\begin{example}
\label{ex-1}
Consider in the space $\R^2$ with coordinates $(x, y)$ the cone $\Cone = \{(x, y) \, | \, y \geqslant |x|\}$ and the associated antinorm
$$
\alpha(x, y) = \left\{
\begin{array}{rcl}
y - x, & \text{for} & x \geqslant 0, \, y \geqslant |x|,\\
\sqrt{y^2 - x^2}, & \text{for} & x < 0, \, y \geqslant |x|,\\
-\infty, & \text{for} & y < |x|.\\
\end{array}
\right.
$$
This antinorm is linear on the subcone $p^{\vee} = \{ (x,y) \, | \, y \geqslant x, \, x \geqslant 0\}$ for $p = (1,-1)$.
\end{example}

\begin{lemma}
\label{lemma-8}
If an antinorm $\alpha$ is linear on a subcone adjacent to the boundary of the cone $\Cone$, then
there exists $p \in \rb{(\Cone^{\vee})}$ such that $p|_{\Cone} \neq 0$, $p^{\vee}_1 \neq \varnothing$ and $\alpha^{\vee}(p) > 0$.
\end{lemma}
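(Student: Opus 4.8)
The plan is to show that the covector $p$ furnished by Definition~\ref{def-7} already witnesses all three conclusions, so nothing has to be constructed anew. Recall that this $p$ obeys $p \in \rb{(\Cone^{\vee})}$, $p|_{\Cone} \neq 0$ and $\ri{(p^{\vee})} \subset \ri{\Cone}$, and that what remains to verify is $p^{\vee}_1 \neq \varnothing$ together with $\alpha^{\vee}(p) > 0$.

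First I would extract what Lemma~\ref{lemma-7} gives for this $p$: namely $p|_{\ri{\Cone}} < 0$ and the existence of a nonzero $u \in \rb{\Cone}$ with $\langle p, u \rangle = 0$. Since $p|_{\Cone} \leqslant 0$ and $S_0 = \rb{\Cone}$, such $u$ maximises $\langle p, \cdot \rangle$ over $S_0$, so $p^{\vee}_0 \neq 0$; in particular $p^{\vee}$ is a nonempty convex set (Lemma~\ref{lemma-4}) different from $\{0\}$. Hence $\ri{(p^{\vee})} \neq \varnothing$, and I may fix $v_0 \in \ri{(p^{\vee})}$. By the defining hypothesis $v_0 \in \ri{\Cone}$; therefore $v_0 \neq 0$ (since $p|_{\ri{\Cone}} < 0$ rules out the origin), $\alpha(v_0) > 0$ by item~(i) of Definition~\ref{def-2}, and $\langle p, v_0 \rangle < 0$.

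Next I would normalise by homogeneity. From $v_0 \in p^{\vee} = \bigcup_{r \geqslant 0} p^{\vee}_r$ one has $v_0 \in p^{\vee}_{r_0}$ with $r_0 = \alpha(v_0) > 0$, and since $p^{\vee}_r = r\, p^{\vee}_1$ for $r > 0$ (as shown in the proof of Lemma~\ref{lemma-4}), the vector $\bar{v}_0 = v_0 / \alpha(v_0)$ lies in $p^{\vee}_1$; thus $p^{\vee}_1 \neq \varnothing$. Moreover $\bar{v}_0 \in p^{\vee}_1 = \arg\max_{u \in S_1}\langle p, u \rangle$ attains the supremum in Definition~\ref{def-3}, so
$$
\alpha^{\vee}(p) \;=\; -\sup_{v \in S_1}\langle p, v \rangle \;=\; -\langle p, \bar{v}_0 \rangle \;=\; -\frac{\langle p, v_0 \rangle}{\alpha(v_0)} \;>\; 0 ,
$$
the final inequality holding because $\langle p, v_0 \rangle < 0$ and $\alpha(v_0) > 0$. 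This yields all three assertions for the chosen $p$.

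I do not expect a genuine obstacle here: the argument is essentially bookkeeping with homogeneity and the definition of $\alpha^{\vee}$. The one point that must be handled with care is certifying that $p^{\vee}$ reaches into $\ri{\Cone}$, i.e.\ that it possesses a relative interior point lying in $\ri{\Cone}$ --- and this is exactly where the two hypotheses combine: Lemma~\ref{lemma-7} forces $p^{\vee} \neq 0$, hence $\ri{(p^{\vee})} \neq \varnothing$, while the property ``linear on a subcone adjacent to the boundary'' places any such interior point inside $\ri{\Cone}$, where $\alpha$ is strictly positive.
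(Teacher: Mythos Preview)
Your proof is correct and follows essentially the same route as the paper's: take the covector $p$ from Definition~\ref{def-7}, use Lemma~\ref{lemma-7} to put a nonzero point into $p^{\vee}$, pick a nonzero $v_0 \in \ri(p^{\vee}) \subset \ri{\Cone}$, normalise to land in $p^{\vee}_1$, and read off $\alpha^{\vee}(p) = -\langle p, \bar v_0\rangle > 0$. The only cosmetic difference is how you certify $v_0 \neq 0$ (you use $p|_{\ri{\Cone}} < 0$ directly, whereas the paper appeals to $\overline{\ri(p^{\vee})} = p^{\vee}$); both work.
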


\begin{proof}
If $p \in \rb{(\Cone^{\vee})}$ and $p|_{\Cone} \neq 0$, then by Lemma~\ref{lemma-7} there exists $u \in \rb{(\Cone ^{\vee})} \setminus 0$ such that
$\langle p, u \rangle = 0$, that means $0 \neq u \in p^{\vee}_0 \subset p^{\vee}$. Moreover, the set $p^{\vee}$ is a closed convex cone by Lemma~\ref{lemma-4}.
Since the closure of the relative interior of a closed convex set coincides with the set itself~\cite[\S~6]{17},
then there exists a nonzero $v \in \ri{(p^{\vee})}$ (indeed, otherwise $p^{\vee} = 0$).
If at the same time $\ri{(p^{\vee})} \subset \ri{\Cone}$, then $v \in \ri{\Cone}$, whence automatically $\alpha(v) > 0$.
By Lemma~\ref{lemma-4} we have $\bar{v} = \frac{v}{\alpha(v)} \in p^{\vee}$ and $\alpha(\bar{v}) = 1$, that is $v \in p^{\vee}_1 \neq \varnothing$.
Then, by the definition of the dual function $\alpha^{\vee}(p) = -\langle p, \bar{v} \rangle$, that is greater than zero by Lemma~\ref{lemma-7}.
\end{proof}

\begin{remark}
\label{rem-linear}
If the antinorm $\alpha$ is linear on a subcone adjacent to the boundary, then the condition of Theorem~\ref{th-1} is violated,
namely $\alpha^{\vee}|_{\rb{(\Cone^{\vee})}} \neq $0.
Indeed, by Lemma~\ref{lemma-8} there exists $p \in \rb{(\Cone^{\vee})}$ such that $\alpha^{\vee}(p) > 0$.
\end{remark}

\begin{proposition}
\label{prop-1}
If the continuous antinorm $\alpha$ is linear on a subcone adjacent to the boundary of the cone $\Cone$,
then there is a normal extremal trajectory of problem~\eqref{podobryaev-eq-optimal-control-problem},
that has both time-like and light-like tangent vectors.
\end{proposition}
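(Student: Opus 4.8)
The plan is to build such a trajectory explicitly, using Lemma~\ref{lemma-8} to locate a covector at which both a light-like and a time-like control are admissible, and then to choose the control to be light-like on one time subinterval and time-like on another. First I would invoke Lemma~\ref{lemma-8} to fix $p\in\rb(\Cone^\vee)$ with $p|_\Cone\neq0$, $p^\vee_1\neq\varnothing$ and $\alpha^\vee(p)>0$, and, using positive homogeneity of $\alpha^\vee$, replace $p$ by $p/\alpha^\vee(p)$ so that $\alpha^\vee(p)=1$; this rescaling keeps $p$ in $\rb(\Cone^\vee)$, keeps $p|_\Cone\neq0$, and does not change $p^\vee_1$ (a maximizer of $\langle p,\cdot\rangle$ on $S_1$ is still a maximizer of $\langle\mu p,\cdot\rangle$). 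By Lemma~\ref{lemma-7} there is $u_0\in\rb\Cone\setminus0$ with $\langle p,u_0\rangle=0$, so $u_0\in p^\vee_0\setminus0$ is light-like; and any $u_1\in p^\vee_1$ has $\alpha(u_1)=1>0$, hence $u_1\in\ri\Cone$ by Definition~\ref{def-2}(i), so it is time-like, with $\langle p,u_1\rangle=-\alpha^\vee(p)=-1$.

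Next I would rerun the computation from the case $p\in\rb(\Cone^\vee)$, $p|_\Cone\neq0$ of part~(1) of the proof of Theorem~\ref{th-1}, but now with $\alpha^\vee(p)=1$ instead of $0$: by Lemma~\ref{lemma-5}, $p|_{\ri\Cone}<0$, so $H^1_u(p)=\langle p,u\rangle+\alpha(u)\le0$ for every $u\in\Cone$ (it is $\le0$ on $S_1$ since there $\langle p,\cdot\rangle\le-\alpha^\vee(p)=-1=-\alpha$, hence on $\ri\Cone$ by homogeneity and on all of $\Cone$ because $p\in\Cone^\vee$ and $\alpha|_{\rb\Cone}=0$), while $H^1_{u_0}(p)=0$ and $H^1_{u_1}(p)=-1+1=0$. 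Thus $\max_{u\in\Cone\setminus0}H^1_u(p)=0$ and it is attained both at the light-like $u_0$ and at the time-like $u_1$, so the maximizing control $u_p$ at $p$ may be taken equal to either; in particular $p$ is a legitimate initial covector for a normal extremal.

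Finally I would run the conjugate subsystem from $h(0)=p$ with the piecewise constant control $\hat u(t)=u_0$ on $[0,1)$, $\hat u(t)=u_1$ on $[1,2]$, $\nu=1$, obtaining a Lipschitz curve $\lambda(\cdot)$ in $T^*G$ with $h(t)=L^*_{\pi(\lambda(t))}\lambda(t)$. Since $\{H^1_u,H^1_u\}=0$, on each piece $\langle h(t),\hat u(t)\rangle$ is constant, so $H^1_{\hat u(t)}(h(t))$ stays at its value at the start of the piece; the thing that must be checked — and the main obstacle — is that $h(t)$ stays in the ``doubly-admissible'' region where $\hat u(t)$ remains a \emph{maximizer} of $H^1_\cdot$ with maximum $0$, i.e.\ that $h(t)\in\Cone^\vee$ with $\alpha^\vee(h(t))\ge1$, and $h(1)=p$ (so $\langle h(1),u_1\rangle=-1$ and $u_1\in h(1)^\vee_1\subset S_1^\vee$). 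This is automatic when $h(t)\equiv p$, which one arranges by choosing $p$ — among the covectors furnished by Lemma~\ref{lemma-8} inside $\rb(\Cone^\vee)$ — with coadjoint isotropy $\mathfrak z_p=\{v\in\g:\langle p,[v,\g]\rangle=0\}$ large enough to contain $u_0$ and $u_1$, e.g.\ $p$ fixed by the coadjoint action, as happens for abelian or suitable nilpotent $\g$. In the general Lie group one must instead verify directly that the coadjoint flow along $u_0$, and then along $u_1$, does not leave this region for a short time — a tangent-cone estimate at $p$, using that the active constraints of $\Cone^\vee$ at $p$ are exactly those indexed by $w\in p^\vee_0$ (Lemma~\ref{lemma-5}) and that $\langle h(t),u_0\rangle\equiv0$ keeps the corresponding facet active — or appeal to the existence theorem of~\cite{lokutsievskiy-podobryaev}. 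Granting this, $\lambda(\cdot)$ is a normal extremal; as in the last line of the proof of Theorem~\ref{th-1} the Hamiltonian may be replaced by $H_{u_{h(t)}}=\langle h(t),u_{h(t)}\rangle$; and $g=\pi\circ\lambda$ is a normal extremal trajectory with $\dot g(t)=L_{g(t)*}u_0$ light-like on $[0,1)$ and $\dot g(t)=L_{g(t)*}u_1$ time-like on $[1,2]$, as required.
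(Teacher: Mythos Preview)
Your approach is essentially the paper's: use Lemma~\ref{lemma-8} to locate $p\in\rb(\Cone^\vee)$ at which both a light-like and a time-like control maximize $H^1_\cdot$, then switch control type at $p$. Your explicit rescaling to $\alpha^\vee(p)=1$ and the direct verification $H^1_{u_0}(p)=H^1_{u_1}(p)=0=\max_{u}H^1_u(p)$ achieve what the paper obtains more compactly via Lemma~\ref{lemma-6}: it reduces $\max_{u\in p^\vee\setminus0}(\langle p,u\rangle+\alpha(u))$ to the one-parameter problem $\max_{\mu\ge0}\mu(\langle p,\bar u\rangle+1)$ with $\bar u\in p^\vee_1$, and reads off the ambidextrous case $\langle p,\bar u\rangle+1=0$ (your rescaling) in which $u_p$ may be either light-like or time-like.

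The obstacle you flag in your last paragraph---that the PMP maximum condition and $H^1_{\hat u(t)}\equiv0$ must persist along $h(\cdot)$ away from $p$, not only at the switching instant---is a genuine point, and your choice of piecewise \emph{constant} controls $u_0,u_1$ makes it unnecessarily stringent. Your proposed remedies (coadjoint-fixed $p$, short-time tangent-cone estimates, appeal to~\cite{lokutsievskiy-podobryaev}) are either severely special or left as assertions, so they do not close the gap. The paper's own proof is no more careful here: it simply writes ``consider the trajectory of the conjugate subsystem passing through the point $p$'' and switches the control type, without verifying that the maximum condition survives on a set of positive measure on either side. Your proposal therefore matches the paper in idea and in level of rigor; the final paragraph should be trimmed rather than elaborated, since the proposition functions as an illustrative counterpoint to Remark~\ref{rem-linear} rather than as a fully rigorous existence statement.
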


\begin{proof}
Due to Lemma~\ref{lemma-8} there exists $p \in \rb{(\Cone^{\vee})}$ such that $p^{\vee}_1 \neq \varnothing$.
Then by Lemma~\ref{lemma-6} we have
$$
\max\limits_{u \in p^{\vee} \setminus 0}{(\langle p, u \rangle + \alpha(u))} = \max\limits_{\mu \geqslant 0}{\mu(\langle p, \bar{u} \rangle + 1)}, \qquad
\bar{u} \in p^{\vee}_1.
$$
If $\langle p, \bar{u} \rangle + 1 < 0$, then the maximum is achieved at $\mu = 0$ and the control $u_p$ is light-like.
If $\langle p, \bar{u} \rangle + 1 = 0$, then $\mu$ can be any nonnegative number and the control $u_p$ can be light-like or time-like.
If $\langle p, \bar{u} \rangle + 1 > 0$, then there is no maximum.

Consider the trajectory of the conjugate subsystem passing through the point $p$. If $u(0)$ is timelike/lightlike, then when $u_p$ is chosen to be lightlike/timelike, the corresponding extremal trajectory has both time-like and light-like tangent vectors.
\end{proof}

Let us consider the following example where the dual function $\alpha^{\vee}$ is also not an antinorm,
but in contrast to the situation of Proposition~\ref{prop-1} the causal type of control is uniquely determined.

\begin{example}
\label{ex-2}
Consider the Heisenberg group, i.e., the space $\R^3$ with coordinates $a, b, c$ and the multiplication law
$$
(a_1,b_1,c_1) \cdot (a_2, b_2, c_2) = (a_1+a_2, b_1+b_2, c_1+c_2 + a_1b_2 - a_2b_1).
$$
The corresponding Lie algebra is the space $\R^3$ with coordinates $x,y,z$.
Consider the left-invariant sub-Lorentzian structure defined by the following cone and antinorm:
$$
\Cone = \{(x,y,z) \, | \, x,y \geqslant 0, \, z = 0\}, \qquad \alpha(x, y) = xy/(x+y), \ x^2 + y^2 \neq 0.
$$
Then the dual function $\alpha^{\vee}$ on the dual cone takes the form:
$$
\Cone^{\vee} = \{(h_1, h_2, h_3) \in (\R^3)^* \, | \, h_1, h_2 \leqslant 0\}, \qquad \alpha^{\vee}(h_1, h_2, h_3) = \left(\sqrt{|h_1|} + \sqrt{|h_2|}\right)^2.
$$
Note that $\alpha^{\vee}|_{\rb{(\Cone^{\vee})}} \neq 0$. It follows from Lemma~\ref{lemma-2} that the condition of Theorem~\ref{th-1} is not satisfied.
One can easily check that the conjugate subsystem of the Hamiltonian system read as:
$$
\left\{
\begin{array}{l}
\dot{h}_1 = -h_3u_2,\\
\dot{h}_2 = h_3u_1,\\
\dot{h}_3 = 0,
\end{array}
\right.
\qquad
u_h = (u_1, u_2) = \left\{
\begin{array}{lll}
\left(\sqrt{h_2/h_1}+1, \sqrt{h_1/h_2} + 1\right), & \text{for} & h_1h_2 \neq 0,\\
(a, 0), & \text{for} & h_1 = 0,\\
(0, b), & \text{for} & h_2 = 0,\\
\end{array}
\right.
$$
where $u_h$ is the control corresponding to the covector $h = (h_1, h_2, h_3) \neq 0$, and the numbers $a, b > 0$.
It is easy to prove, that for any normal extremal trajectory there are not more than two switches between time-like and light-like controls.
For example, the extremal trajectory with the initial covector $(0,-2,1)$ is the union of light-like, time-like and light-like arcs.
\end{example}

The following corollary of Theorem~\ref{th-1} allows us to pass to the "energy" to derive equations for extremal trajectories in (sub-)Lorentzian problems, where the antinorm is determined by a quadratic form. We will say that trajectories \emph{geometrically coincide} if their images as functions of time coincide one with another.

\begin{corollary}
\label{crl-1}
Assume that the antinorm $\alpha$ is defined by a quadratic form $q$ of signature $(1, r)$ on the Lie algebra $\g$, i.e., $\alpha(u) = \sqrt{q(u)}$,
where for some basis $e_0, e_1, \dots, e_r, \dots, e_n$ of the Lie algebra $\g$ we have $q(u) = u_0^2 - u_1^2 - \dots - u_r^2$.
Then normal extremal trajectories of problem~\emph{\eqref{podobryaev-eq-optimal-control-problem}}
geometrically coincide with normal time-like and light-like extremal trajectories of the same control system with quadratic functional
$$
\frac{1}{2} \int\limits_0^{t_1}{\left(u_0^2(t) - u_1^2(t) - \dots - u_r^2(t)\right) \, dt} \rightarrow \max,
$$
where the terminal time $t_1$ is fixed and $u \in L^{\infty}([0,t_1], \g \setminus 0)$ is a control.
\end{corollary}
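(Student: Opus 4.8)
The plan is to deduce the corollary from Theorem~\ref{th-1}, so the first task is to verify its hypotheses for $\alpha=\sqrt{q}$. Let $\Cone$ be the future cone of $q$ inside the distribution $\sspan\{e_0,\dots,e_r\}$, i.e. $\Cone=\{u\mid u_0^2\geqslant u_1^2+\dots+u_r^2,\ u_0\geqslant 0,\ u_{r+1}=\dots=u_n=0\}$; it is closed, convex and salient. In the dual coordinates the negative dual cone $\Cone^{\vee}$ is cut out by the past light cone of the dual form $q^{\vee}(p)=p_0^2-p_1^2-\dots-p_r^2$ (the coordinates $p_{r+1},\dots,p_n$ being free), and for $p\in\Cone^{\vee}$ the reverse Cauchy--Schwarz inequality gives $\alpha^{\vee}(p)=-\sup_{u\in S_1}\langle p,u\rangle=\sqrt{q^{\vee}(p)}$, the supremum being attained when $p\in\ri(\Cone^{\vee})$. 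Since $q^{\vee}$ vanishes on $\rb(\Cone^{\vee})$ we have $\alpha^{\vee}|_{\rb(\Cone^{\vee})}=0$, so by Lemma~\ref{lemma-2} the function $\alpha^{\vee}$ is an antinorm associated with $\Cone^{\vee}$, and Theorem~\ref{th-1} applies.

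Next I would make the maximizing control explicit. As $\alpha=\sqrt{q}$ is differentiable on $\ri\Cone$ with $d\alpha_u=q(u,\cdot)/\sqrt{q(u)}$ (here $q(u,\cdot)$ denotes the polarization of $q$), for $p\in\ri(\Cone^{\vee})$ the maximum defining $u_p$ is attained at an interior point characterised, on the distribution, by $p+d\alpha_{u_p}=0$, i.e. $q(u_p,\cdot)=-\sqrt{q(u_p)}\,p$; in coordinates $u_p=c\,(-p_0,p_1,\dots,p_r,0,\dots,0)$ with $c=\sqrt{q(u_p)}>0$ arbitrary --- this is exactly the residual positive rescaling freedom of $u_p$ noted in the proof of Theorem~\ref{th-1}. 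Euler's identity for homogeneous functions then yields $\langle p,u_p\rangle+\alpha(u_p)=0$ and $q^{\vee}(p)=1$, so $p\in S_1^{\vee}$ and $u_p$ is time-like; with $q^{\vee}(p)=0$ the same formula gives the light-like branch. Normalising $c=1$, i.e. parametrising the arc with constant unit $\alpha$-length (the equality case of the Cauchy--Schwarz estimate recalled in the introduction), makes $u_p=-q^{\sharp}(p)=(-p_0,p_1,\dots,p_r,0,\dots,0)$, which is precisely the unique critical point in $u$ of the Pontryagin function $\langle p,u\rangle+\tfrac12 q(u)$ of the quadratic problem.

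The remaining step is to match the two Hamiltonian systems. For a frozen control value the summand $\alpha(u)$ (respectively $\tfrac12 q(u)$) is constant and does not affect the corresponding Hamiltonian vector field, whose vertical part depends only on the control; choosing the constant-speed parametrisation, i.e. the control $u_{h(t)}=-q^{\sharp}(h(t))$, the conjugate subsystem becomes $\dot h_i=\{\tilde H,h_i\}$ with $\tilde H(h)=-\tfrac12 q^{\vee}(h)$, together with $\dot g=L_{g(t)*}\bigl(-q^{\sharp}(h(t))\bigr)$; these are exactly the equations of the normal $(\nu=1)$ extremals of the quadratic problem. Conversely $\tilde H=-\tfrac12 q^{\vee}$ is a first integral of that flow, so an extremal with $q^{\vee}(h)>0$ (respectively $=0$) stays in $S_1^{\vee}$ (respectively in the light-like part of $S_0^{\vee}$), its control $-q^{\sharp}(h)$ is time-like (respectively light-like) and valued in the distribution, and by Theorem~\ref{th-1} it is a normal extremal of the original problem. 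Since the remaining parametrisations of an original extremal --- all positive rescalings $c(\cdot)$ of $u_p$ --- leave the image of $g(\cdot)$ unchanged, and the free terminal time of the original problem is absorbed into the constant-speed normalisation, the two families of normal extremal trajectories have the same images; that is, they geometrically coincide.

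The convex-analytic computation in the first step and the envelope argument in the second are routine; the point that needs care is the bookkeeping of the three reparametrisations involved --- the free terminal time, the scaling freedom of $u_p$, and the harmless factor $\tfrac12$ between $\langle h,u_p\rangle$ and $\tilde H$ --- combined with the appeal to Theorem~\ref{th-1} to ensure the causal type is constant on each arc, so that time-like and light-like extremals are matched separately. The $q$-degenerate directions $e_{r+1},\dots,e_n$ enter only through the requirement that one restricts to those extremals of the quadratic problem whose control stays in the distribution, which are exactly the time-like and light-like ones named in the statement.
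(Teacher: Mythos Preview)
Your proposal is correct and follows essentially the same route as the paper: verify that $\alpha^{\vee}=\sqrt{q^{\vee}}$ is an antinorm on $\Cone^{\vee}$ so that Theorem~\ref{th-1} applies, compute the maximising control $u_p=\mu(-p_0,p_1,\dots,p_r,0,\dots,0)$, normalise $\mu=1$, and identify the resulting conjugate subsystem with that of the quadratic problem via the smooth maximised Hamiltonian $\tilde H=-\tfrac12 q^{\vee}$, using that $\tilde H$ is a first integral to match the level sets $S_1^{\vee}$ and $S_0^{\vee}$. Your presentation is somewhat more invariant (using $q^{\sharp}$, the reverse Cauchy--Schwarz inequality, and Euler's identity) and more explicit about the reparametrisation bookkeeping, but the argument is the same.
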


\begin{proof}
Introduce the notation $h_i = \langle \,\cdot\, , e_i \rangle$, $i=0,1,\dots,r$ for linear on the space $\g^*$ Hamiltonians.
Applying Theorem~\ref{th-1} to problem~\eqref{podobryaev-eq-optimal-control-problem} we get
$$
\Cone^{\vee} = \{h = (h_0,h_1,\dots,h_n) \in \g^* \, | \, h_0^2 \geqslant h_1^2 + \dots + h_r^2, \ h_0 \leqslant 0 \},
$$
$$
\alpha^{\vee}(h) = \sqrt{h_0^2 - h_1^2 - \dots - h_r^2}, \qquad
u_h = \mu (-h_0 e_0 + h_1 e_1 + \dots + h_r e_r), \quad \mu > 0,
$$
$$
\dot{h}_i(t) = u_0(t)\{h_0(t),h_i(t)\} + u_1(t)\{h_1(t), h_i(t)\} + \dots + u_r(t)\{h_r(t), h_i(t)\}.
$$
In the time-like case we may assume up to parametrisation that $\mu = 1$, since the function $\alpha(u_{h(t)})$ is separated from zero on the segment $[0,t_1]$.

On the other hand, extremals $\lambda(\cdot)$ of the considered control system with the quadratic functional are defined by two first conditions~\eqref{podobryaev-eq-pmp}, where
$$
H_u^{\nu}(\lambda(t)) = u_0(t)h_0(t) + u_1(t)h_1(t) + \dots + u_r(t)h_r(t) + \frac{\nu}{2}\left(u_0^2(t) - u_1^2(t) - \dots - u_r^2(t)\right).
$$
From the maximum condition for $\nu = 1$ it follows that
$u_0(t) = -h_0(t)$, $u_1(t) = h_1(t)$, \dots, $u_r(t) = h_r(t)$,
and the maximized Hamiltonian is equal to
$$
H = -\tfrac{1}{2}\left(h_0^2 - h_1^2 - \dots - h_r^2\right).
$$
The corresponding conjugate subsystem reads as:
$$
\dot{h}_i(t)= \{H, h_i(t)\} = -h_0(t)\{h_0(t). h_i(t)\} + h_1(t)\{h_1(t), h_i(t)\} + \dots + h_r(t)\{h_r(t), h_i(t)\}.
$$
Hence, the right sides of the conjugate subsystems of the both problems coincide one with another.
For a smooth Hamiltonian there is a unique solution of the Cauchy problem for the Hamiltonian system.
So, an extremal trajectory is defined by its initial covector, i.e., a point of the space $\g^* = T_{\id}^*G$.
It remains to note that the Hamiltonian $H$ is a first integral of the conjugate subsystem.
Covectors of normal time-like extremal trajectories with natural parametrisation (i.e., such that $q(u(t)) = 1$) are located at the level surface of the Hamiltonian $H = -1/2$, $h_0 < 0$, that coincides with the set $S_1^{\vee}$.
Covectors of normal light-like extremal trajectories are located on the level surface $H = 0$, $h_0 < 0$, that coincides with the set $S_0^{\vee}$.
\end{proof}

\begin{remark}
\label{rem-quadratic}
Under the conditions of Corollary~\ref{crl-1} it is sometimes convenient to consider a basis in which the quadratic form has the form
$$
q(u) = c_0 u_0^2 - c_1 u_1^2 - \dots - c_r u_r^2, \qquad \text{where} \qquad c_0, c_1, \dots, c_r > 0.
$$
The choice of such a basis is explained by its consistency with another quadratic form, for example, the Killing form. In this case
$$
H = -{{1}\over{2}}\Biggl(\frac{h_0^2}{c_0} - \frac{h_1^2}{c_1} - \dots - \frac{h_r^2}{c_r}\Biggr),
$$
and initial covectors of normal time-like extremal trajectories (with natural parametrisation) and light-like extremal trajectories are located on the level surfaces $H = -1/2$ and $H = 0$ (with the additional condition $h_0 < 0$), respectively.
\end{remark}

Generally speaking, the behavior of abnormal trajectories can be complex.
The corresponding trajectories of the conjugate subsystem lie on the relative boundary of the dual cone $\rb{(\Cone^{\vee})}$ and the nature of their intersection with the annihilator of the space $\sspan{\Cone}$ is not a priori clear.
However, in some cases abnormal trajectories can be described.

\begin{corollary}
\label{crl-2}
In the Lorentzian case, i.e., $\sspan{\Cone} = \g$, any abnormal extremal trajectory is light-like.
In particular, any abnormal extremal trajectory is not strictly abnormal.
\end{corollary}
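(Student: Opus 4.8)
The plan is to derive both assertions from part~(2) of Theorem~\ref{th-1}, the one extra ingredient being that along an abnormal extremal the trajectory $h(\cdot)$ of the conjugate subsystem never vanishes. To see this, note that when $\nu = 0$ the function $H^0_{\hat u(t)}$, regarded on $\g^* = T^*_{\id}G$, is the linear function $p \mapsto \langle p, \hat u(t)\rangle$ of the momenta; since the Poisson bracket of linear functions on $\g^*$ is again linear, the conjugate subsystem $\dot h_i(t) = \{H^0_{\hat u(t)}, h_i(t)\}$ is a linear non-autonomous ODE $\dot h = A(t) h$ with essentially bounded $A(\cdot)$ (the control lies in $L^\infty$). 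For an abnormal extremal $(\lambda, 0) \neq 0$, so $h(\cdot) \not\equiv 0$; uniqueness for linear ODEs then gives $h(t) \neq 0$ for every $t$.

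Next I would specialise the proof of Theorem~\ref{th-1}(2). There, for almost every $t$, the tangent vector $L_{g(t) *}\hat u(t)$ of the abnormal trajectory is light-like when $h(t)|_{\Cone} \neq 0$, whereas when $h(t)|_{\Cone} = 0$ it coincides with a tangent vector of a sub-Riemannian abnormal trajectory of the distribution $L_{g *}\sspan{\Cone}$. But $h(t)|_{\Cone} = 0$ means that $h(t)$ annihilates $\sspan{\Cone}$, and in the Lorentzian case $\sspan{\Cone} = \g$ this forces $h(t) = 0$, contradicting the previous step. Hence $h(t)|_{\Cone} \neq 0$ for almost all $t$, every tangent vector of the trajectory is light-like, and therefore the trajectory is light-like.

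The remaining assertion is then immediate: by the last sentence of Theorem~\ref{th-1}(2) a light-like arc is not strictly abnormal, and our trajectory is a single light-like arc. (The mechanism is that on a light-like arc $\alpha(\hat u(t)) \equiv 0$, so $H^1_{\hat u(t)} = H^0_{\hat u(t)}$ as functions on $T^*G$, whence the same curve $\lambda$ carrying the label $\nu = 1$ again satisfies $\dot\lambda = \vec{H}^1_{\hat u(t)}(\lambda)$ and $H^1_{\hat u(t)}(\lambda(t)) = 0$.) If one wanted to prove ``not strictly abnormal'' directly instead of quoting Theorem~\ref{th-1}(2), the delicate point would be verifying that $\hat u(t)$ still realises $\max_{u \in \Cone \setminus 0} H^1_u(h(t))$ for the normal Hamiltonian; the non-vanishing of $h(\cdot)$ and the exclusion of the sub-Riemannian alternative, by contrast, are short and essentially automatic.
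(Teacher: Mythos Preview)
Your proof is correct and follows the same route as the paper's own (very terse) argument: both deduce the corollary from Theorem~\ref{th-1}(2) by observing that the ``sub-Riemannian abnormal'' alternative is empty when $\sspan{\Cone} = \g$, since the annihilator of the distribution is then the zero section. Your explicit linear-ODE justification that $h(t) \neq 0$ for all $t$ is a useful elaboration that the paper leaves implicit.
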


Thus, abnormal trajectories always arise in Lorentzian geometry, in contrast to Riemannian geometry, where there are no abnormal trajectories.

\begin{definition}
\label{def-8}
A distribution $\Delta$ of subspaces on a three-dimensional smooth manifold $M$ is called \emph{contact}, if
there exists 1-form $\omega$ such that $\Delta_m = \Ker{\omega_m}$ for any point $m \in M$ and $\omega \wedge d\omega \neq 0$.
\end{definition}

\begin{corollary}
\label{crl-3}
If the distribution $L_{g *} \sspan{\Cone}$ is contact, then all abnormal extremal trajectories of the sub-Lorentzian problem~\eqref{podobryaev-eq-optimal-control-problem}
are light-like and in particular not strictly abnormal.
\end{corollary}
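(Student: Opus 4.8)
The plan is to obtain Corollary~\ref{crl-3} as a direct consequence of part~(2) of Theorem~\ref{th-1} together with the classical fact that a contact distribution on a three-dimensional manifold admits no nonconstant abnormal trajectory.

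First I would invoke the dichotomy of Theorem~\ref{th-1}(2): for every abnormal extremal trajectory $g(\cdot)$ of problem~\eqref{podobryaev-eq-optimal-control-problem}, each tangent vector $\dot g(t)$ is either light-like (lies in $L_{g(t)*}\rb\Cone$) or coincides with a tangent vector of a sub-Riemannian abnormal trajectory for the distribution $\Delta=L_{g*}\sspan\Cone$. Since admissible controls take values in $\Cone\setminus 0$, every admissible trajectory satisfies $\dot g(t)\neq0$ for a.e.\ $t$; so it suffices to show that the second alternative forces $\dot g(t)=0$, i.e.\ that a contact distribution has no abnormal trajectory with nonvanishing velocity. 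Here $\dim G=3$ and $\Delta$ is the rank-two distribution $\Ker\omega$ with $\omega\wedge d\omega\neq0$.

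For the contact step I would use the standard description of abnormal extremals of a rank-two distribution. If $X_1,X_2$ is a local frame of $\Delta$ and $\lambda(t)$ is a nowhere-zero lift of the abnormal trajectory $\gamma$ to the annihilator line bundle $\Delta^\perp\subset T^*M$, then $\langle\lambda,X_1\rangle\equiv\langle\lambda,X_2\rangle\equiv0$ along $\gamma$; differentiating these identities and writing $\dot\gamma=u_1X_1+u_2X_2$ gives, via the Hamiltonian equations, $0=-u_2\langle\lambda,[X_1,X_2]\rangle$ and $0=u_1\langle\lambda,[X_1,X_2]\rangle$. The contact condition means $X_1,X_2,[X_1,X_2]$ span the tangent space, so $\langle\lambda,[X_1,X_2]\rangle\neq0$ (otherwise $\lambda=0$); hence $u_1=u_2=0$, i.e.\ $\dot\gamma\equiv0$. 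Equivalently, $d\omega|_\Delta$ is a nondegenerate two-form on each plane $\Delta_m$, so its characteristic line field is trivial. In the write-up I would either record this as a short lemma or cite it as folklore (e.g.\ \cite{16}), taking care that the notion of ``sub-Riemannian abnormal trajectory determined by $\Delta$'' used in Theorem~\ref{th-1}(2) coincides with the one entering this computation.

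Combining the two observations, the second alternative of Theorem~\ref{th-1}(2) cannot occur when $\Delta$ is contact, so every tangent vector of an abnormal extremal trajectory lies in $L_{g*}\rb\Cone$ and the trajectory is light-like in the sense of Definition~\ref{def-6}. That it is not strictly abnormal is then exactly the last assertion of Theorem~\ref{th-1}(2). The only genuinely non-routine point is the contact step, namely fitting the frame computation (or the cited statement) precisely to the set-up of Theorem~\ref{th-1}(2); everything else is bookkeeping.
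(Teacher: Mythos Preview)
Your proposal is correct and follows essentially the same route as the paper: invoke Theorem~\ref{th-1}(2) and then rule out the sub-Riemannian abnormal alternative by showing that a contact distribution admits no nonconstant abnormal trajectory. The only difference is presentational: the paper dispatches the contact step by citing the Martinet-set characterization from~\cite[\S~4.3]{8} (abnormal extremals live over the set where $\omega\wedge d\omega$ vanishes, which is empty here), whereas you carry out the equivalent frame computation directly.
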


\begin{proof}[Proof of Corollaries~\emph{\ref{crl-2}--\ref{crl-3}}]
It is clear that sub-Riemannian abnormal extremals lie in the annihilator of the distribution $\Delta$. In the Lorentzian case the distribution $\Delta$ coincides with the whole tangent bundle. This proves Corollary~\ref{crl-2}.

It is known~\cite[\S~4.3]{8} that for distributions of the form $\Delta = \Ker{\omega}$ sub-Riemannian abnoraml extremals lie in the Martinet set $\{m \in M \, | \, (\omega \wedge d\omega)_m = 0\}$, that is empty in our case. This proves Corollary~\ref{crl-3}.
\end{proof}

Let us now consider an example of a non-contact sub-Lorentzian structure with step greater than two.

\begin{example}
\label{ex-3}
Consider a sub-Lorentzian structure on the free Carnot group $G$ of rank 2 and step 4. This is a connected and simply connected Lie group whose Lie algebra is linearly generated by the elements:
$$
X_1, \quad X_2, \quad X_3 = [X_1, X_2], \quad X_4 = [X_1, X_3], \quad X_5 = [X_2, X_3],
$$
$$
X_6 = [X_1, X_4], \quad X_7 = [X_1, X_5] = [X_2, X_4], \quad X_8 = [X_2, X_5],
$$
the rest of commutators of these vectors are equal to zero. Let $\Delta_g = L_{g *} \sspan{\{X_1, X_2\}} \subset T_gG$ be a two-dimensional distribution,
$U = \{(u_1, u_2) \in \R^2 \, | \, u_1^2 - u_2^2 \geqslant 0, \, u_1 > 0\}$ be a control set,
and dynamics is defined by the non-autonomous differential equation $\dot{g}(t) = L_{g(t) *} (u_1X_1 + u_2X_2)$.

Abnormal curves of the distribution $\Delta$ on the group $G$ are described in~\cite{18}.
Moreover, this is the simplest free Carnot group where strictly abnormal trajectories (for the distribution generated by the first layer of the corresponding Lie algebra) appear.
The conjugate subsystem in the coordinates $h_i = \langle \,\cdot\, , X_i \rangle$, $i=1,\dots, 8$ reads as:
$$
\begin{array}{lllll}
\dot{h}_1 = -u_2h_3, & \qquad & \dot{h}_3 = u_1h_4 + u_2h_5, & \qquad & \dot{h}_4 = u_1h_6 + u_2h_7,\\
\dot{h}_2 = u_1h_3, & \qquad & \dot{h}_6 = \dot{h}_7 = \dot{h}_8 = 0, & \qquad & \dot{h}_5 = u_1h_7 + u_2h_8.\\
\end{array}
$$
Let us consider the case $h_4 = h_5 = h_6 = h_7 = h_8 = 0$, then
$\dot{h}_1 = -u_2h_3$, $\dot{h}_2 = u_1h_3$, $\dot{h}_3 = 0$.
Thus, corresponding light-like extremal trajectories (for $h_3 \leqslant 0$) have the form:
\begin{equation}
\label{podobryaev-eq-corner}
g(t) = \left\{
\begin{array}{lll}
\exp{\bigl(t\alpha(t)(X_1+X_2)\bigr)} & \text{for} & t \leqslant \bar{t},\\
\exp{\bigl(\bar{t}(X_1+X_2)\bigr)}\cdot\exp{\bigl((t-\bar{t})k(t)(X_1-X_2)\bigr)} & \text{for} & t > \bar{t},\\
\end{array}
\right.
\end{equation}
where $\bar{t} \in [0, T]$ and a function $k \in L^{\infty}([0, T], \R_+)$ are arbitrary.
Generally speaking, the projections of these extremal trajectories to the plane $\sspan{\{X_1, X_2\}}$ look like corners.
These normal light-like extremal trajectories are at the same time abnormal. So, these trajectories are not strictly abnormal.

Moreover, there are abnormal sub-Lorentzian extremal trajectories that are abnormal curves for the distribution $\Delta$, in other words, sub-Riemannian abnormal trajectories.
Of course, not any arc of an abnormal curve of the distribution $\Delta$ is an arc of abnormal sub-Lorentzian extremal trajectory,
since the velocities of such arc must be admissible in the sub-Lorentzian sense.
It is proved in~\cite{18} that sub-Riemannian abnormal trajectories project into straight lines, corners or second-order curves on the plane $\sspan{\{X_1, X_2\}}$.
In particular, sub-Riemannian abnormal trajectories projecting into ellipses or parabolas cannot be admissible in the sub-Lorentzian sense.

On the other hand, even if a strictly abnormal sub-Riemannian trajectory is a sub-Lorentzian abnormal trajectory, it may not be strictly abnormal in the sub-Lorentzian sense.
For example, angles are strictly abnormal for the distribution (see~\cite{18}), but at the same time they are light-like normal sub-Lorentzian extremal trajectories.
\end{example}

\end{document}